\numberwithin{equation}{section} 
\def\C{{\mathbb C}}
\def\N{{\mathbb N}}
\def\P{{\mathbb P}}
\def\R{{\mathbb R}}
\def\*S{{\mathbb S}}
\def\interior{\hspace{0.02in}
\begin{picture}(6,6)
\put (0,0){\line(1,0){6}}
\put (6,0){\line(0,1){6}}
\end{picture} \hspace{0.04in}}
\newcommand{\dis}{\displaystyle}
\newcommand{\cal}{\mathcal}
\newtheorem{theorem}{Theorem}
\newtheorem{proposition}{Proposition}[section]
\newtheorem{lemma}[proposition]{Lemma}
\newtheorem{corollary}[proposition]{Corollary}
\begin{document}
\title[Explicit Hodge decomposition on Riemann surfaces]
{Explicit Hodge decomposition on Riemann surfaces}

\author{Gennadi M. Henkin}
\address{Institut de Mathematiques \\ Universite Pierre et Marie Curie \\ 75252 BC247 Paris \\
Cedex 05 \\ France, and CEMI  Acad. Sc. \\ 117418 \\ Moscow, Russia}
\curraddr{}
\email{guennadi.henkin@imj-prg.fr}

\author{Peter L. Polyakov}
\address{University of Wyoming \\ Department of Mathematics \\ 1000 E University Ave
\\ Laramie, WY 82071}
\curraddr{}
\email{polyakov@uwyo.edu}
\thanks{Second author was partially supported by the NEUP program of
the Department of Energy}

\subjclass[2010]{Primary: 14C30, 32S35, 32C30}

\keywords{$\bar\partial$-operator, Riemann surface, Hodge decomposition}

\begin{abstract}
We present a construction of an explicit Hodge decomposition for $\bar\partial$-operator
on Riemann surfaces.
\end{abstract}

\maketitle


\section{Introduction}\label{Introduction}

\indent
Classical Hodge decomposition on the space $Z^{(0,1)}(V)\subset {\cal E}^{(0,1)}(V)$
of smooth $\bar\partial$-closed $(0,1)$-forms on a smooth algebraic curve $V\subset\C\P^n$,
$(n\geq 2)$ with metric induced by Fubini-Study metric of $\C\P^n$, has the following form

\newtheorem{Hthm}{Hodge Theorem}
\renewcommand{\theHthm}{}
\begin{Hthm}\label{ClassicalHodge}(\cite{Ho,Wy1,Wy2,Kd1})
For any form $\phi\in Z^{(0,1)}\left(V\right)$ there exists a unique Hodge decomposition:
\begin{equation}\label{ClassicalDecomposition}
\phi=\bar\partial R_1[\phi]+ H_1[\phi],
\end{equation}
where $H_1$ is the orthogonal projection operator from $Z^{(0,1)}\left(V\right)$
onto the subspace ${\cal H}^{(0,1)}\left(V\right)$ of antiholomorphic $(0,1)$-forms on $V$,
$R_1=\bar\partial^*G_1$, $\bar\partial^*:{\cal E}^{(0,1)}(V)\to {\cal E}^{(0,0)}(V)$,
where $\bar\partial^*=-*\bar\partial*$ is the Hodge dual operator for $\bar\partial$,
$*$ is the Hodge operator, and $G_1$ is the Hodge-Green operator for Laplacian
$\triangle=\bar\partial\bar\partial^*+\bar\partial^*\bar\partial$ on $V$.
\end{Hthm}

\indent
Hodge Theorem was proved by Hodge in \cite{Ho} using the Fredholm's theory of integral
equations. Weyl used his method of orthogonal projection from \cite{Wy1} to correct and
simplify the Hodge's proof in \cite{Wy2}, and was followed by Kodaira in \cite{Kd1},
who also used Weyl's method of orthogonal projection.
However, the Hodge Theorem, as it is formulated above and in \cite{Dl, BDIP},
is not explicit enough for some applications. This disadvantage was pointed out by
Griffiths and Harris in (\cite{GH} \S 0.6), where the authors remarked that \lq\lq the
Hilbert space method has the disadvantage of not giving us the Green's operator\rq\rq\
in the form of an integral operator with \lq\lq a beautiful kernel on $M\times M$ with
certain singularities along the diagonal\rq\rq.\\
\indent
A specific problem that we have in mind is an explicit solution
of the inverse conductivity problem on a bordered Riemann surface, in which the conductivity function has to be reconstructed from the Dirichlet-to-Neumann map on its boundary
(see \cite{C}, \cite{HN}), in more general setting going back to \cite{Ge}.
We notice that article \cite{HN} of Henkin and Novikov on this subject was motivated
by article \cite {HP1} by the authors of the present article.
In \cite{HP4} we made the first step toward explicit solution of the inverse conductivity problem
by constructing an explicit Hodge-type decomposition for {\it $\bar\partial$-closed residual
currents} of homogeneity zero on reduced complete intersections in $\C\P^n$.
In the present article using Theorem 1 from \cite{HP4} we construct an explicit formula
for operator $R_1$ in \eqref{ClassicalDecomposition} assuming the knowledge of operator
$H_1$. A problem that is definitely worth considering is the construction of
an explicit form of $H_1$. Our choice of Riemann surfaces is motivated by the application
mentioned above, though we consider the generalization of Theorem 1 from \cite{HP4}
to locally complete intersections as another interesting and important task.\\
\indent
The main result of the present article is the construction in
Theorem~\ref{ExplicitHodge} of an explicit Hodge decomposition for $\bar\partial$-closed
forms on an arbitrary Riemann surface assuming the knowledge of the Hodge projection.
This construction is based on a generalized version of Theorem 1 from \cite{HP4}, which is
presented in section \ref{NewVersion} and covers the case of arbitrary homogeneity.
The decompositions in Theorem~\ref{HomogeneousHodge} and in the propositions below
are explicit in the sense that they depend only on the equations from \eqref{Variety}
describing $V$ as a subvariety of $\C\P^n$, and are defined by explicit integral operators
with singular kernels of the Coleff-Herrera \cite{CH} and of the Cauchy-Weil-Leray
types \cite{Wi, L}.\\
\indent
Construction of integral formulas on $\C\P^n$ with application to complex Radon transform
was initiated in \cite{HP1} and \cite{Bn}. Application of such formulas to solution of
$\bar\partial$-equation on singular analytic spaces was initiated in \cite{HP2}
and motivated further work in this direction (see for example \cite{AS, FG, Sa}).
The development of specific residual formulas in \cite{HP4} and
in Theorem~\ref{HomogeneousHodge} above has a long history going back to
Poincare \cite{P}, Leray \cite{L}, Grothendieck \cite{Gr}, Herrera-Lieberman \cite{HL},
Dolbeault \cite{Do}, and Coleff and Herrera \cite{CH}. As it is pointed out in \cite{CH},
the authors' work was conceived on the one hand as a generalization of the theory
of residues of meromorphic forms and of the Grothendieck's theory
of residues presented by Hartshorne \cite{Ha1}, and on the other hand of the work
of Ramis and Ruget \cite{RR} on dualizing complex in analytic geometry.\\
\indent
The modern development of the formulas of Cauchy-Weil-Leray
type was initiated in \cite{Kp}, \cite{Po}, \cite{Li}, \cite{O}.\\
\indent
As a preliminary step in the construction of the explicit Hodge decomposition
of Theorem~\ref{ExplicitHodge} we use Theorem~\ref{HomogeneousHodge}
and construct in Proposition~\ref{CurveHodge} an intermediate explicit Hodge-type
decomposition on an arbitrary compact Riemann surface ${\cal X}$, not necessarily embeddable
into $\C\P^2$. According to a classical result (see \cite{Kd2}), going back to Gauss
and Riemann, such surface admits an embedding as an algebraic submanifold in $\C\P^3$.
This embedding can be composed with a generic projection on $\C\P^2$ to produce
an immersion into $\C\P^2$ with only nodes as singularities (see \cite{GH, Ha2}). Then we use
the Hodge-type decomposition of Theorem~\ref{HomogeneousHodge} on the
image ${\cal C}$ of this immersion, which we lift and appropriately modify on ${\cal X}$.
An important role in this construction is played by Proposition~\ref{Isomorphism}, in which we establish an isomorphism between the residual cohomologies on ${\cal C}$ and the
cohomologies of the structural sheaf of ${\cal C}$. This proposition might be considered
as a step in constructing a Hodge-type decomposition of cohomologies on curves
with singularities, following direction of \cite{Gr},\cite{Ha2}, and \cite{Dl}.
We notice that the proof of Theorem~\ref{ExplicitHodge} generalizes verbatim to the case
of a nonsingular projective complete intersection.\\
\indent
In section \ref{Vanishing} using Theorems~\ref{HomogeneousHodge} and
\ref{ExplicitHodge} we obtain explicit formulas for solutions of $\bar\partial$-equation
and present two explicit versions of the {\it Hodge-Kodaira Vanishing Theorem}
for open Riemann surfaces.

\section{Generalized version of Theorem 1 from \cite{HP4}.}
\label{NewVersion}

\indent
Below we present a generalized version of Theorem 1 from \cite{HP4}, which gives
a Hodge-type decomposition of residual currents of arbitrary homogeneity on reduced
complete intersections in $\C\P^n$. Before formulating this version we introduce definitions
from \cite{HP3} and \cite{HP4}.\\
\indent
Let $V$ be a complete intersection subvariety
\begin{equation}\label{Variety}
V=\left\{z\in \C\P^n:\ P_1(z)=\cdots=P_m(z)=0\right\}
\end{equation}
of dimension $n-m$ in $\C\P^n$ defined by a collection $\left\{P_k\right\}_{k=1}^m$
of homogeneous polynomials. Let
$$\left\{U_{\alpha}=\left\{z\in\C\P^n:\ z_{\alpha}\neq 0\right\}\right\}_{\alpha=0}^n$$
be the standard covering of $\C\P^n$, and let
$${\bf F}^{(\alpha)}(z)=\left[\begin{tabular}{c}
$F^{(\alpha)}_1(z)$\vspace{0.1in}\\
\vdots\vspace{0.1in}\\
$F^{(\alpha)}_m(z)$
\end{tabular}\right]
=\left[\begin{tabular}{c}
$P_1(z)/z_{\alpha}^{\deg P_1}$\vspace{0.1in}\\
\vdots\vspace{0.1in}\\
$P_m(z)/z_{\alpha}^{\deg P_m}$
\end{tabular}\right]$$
be collections of nonhomogeneous polynomials satisfying
$${\bf F}^{(\alpha)}(z)=A_{\alpha\beta}(z)\cdot{\bf F}^{(\beta)}(z)
=\left[\begin{tabular}{ccc}
$\left(z_{\beta}/z_{\alpha}\right)^{\deg P_1}$&$\cdots$&0\vspace{0.1in}\\
\vdots&$\ddots$&\vdots\vspace{0.1in}\\
0&$\cdots$&$\left(z_{\beta}/z_{\alpha}\right)^{\deg P_m}$
\end{tabular}\right]
\cdot{\bf F}^{(\beta)}(z)$$
on $U_{\alpha\beta}=U_{\alpha}\cap U_{\beta}$.\\
\indent
Following \cite{Gr} and \cite{Ha2} we consider a line bundle ${\cal L}$ on $V$
with transition functions
$$l_{\alpha\beta}(z)=\det A_{\alpha\beta}
=\left(\frac{z_{\beta}}{z_{\alpha}}\right)^{\sum_{k=1}^m\deg P_k}$$
on $U_{\alpha\beta}$ and the {\it dualizing bundle} on the complete intersection subvariety $V$
\begin{equation}\label{Dualizing}
\omega^{\circ}_V=\omega_{\C\P^n}\otimes{\cal L},
\end{equation}
where $\omega_{\C\P^n}$ is the canonical bundle on $\C\P^n$.\\
\indent
For $q=1,\dots,n-m$ we denote by ${\cal E}^{(n,n-m-q)}\left(V,{\cal L}(-\ell)\right)
={\cal E}^{(0,n-m-q)}\left(V,\omega^{\circ}_V(-\ell)\right)$ the space
of $C^{\infty}$ differential forms of bidegree $(n,n-m-q)$ with coefficients in
${\cal L}\otimes{\cal O}(-\ell)$, i.e. the space of collections of forms
$$\left\{\gamma_{\alpha}\in
{\cal E}^{(n,n-m-q)}\left(U_{\alpha}\right)\right\}_{\alpha=0}^n$$
satisfying
\begin{equation}\label{Transition}
\gamma_{\alpha}=l_{\alpha\beta}\cdot\left(\frac{z_{\beta}}{z_{\alpha}}\right)^{-\ell}
\gamma_{\beta}+\sum_{k=1}^mF^{(\alpha)}_k\cdot\gamma^{\alpha\beta}_k\
\mbox{on}\ U_{\alpha}\cap U_{\beta}.
\end{equation}
\indent
Then, following \cite{CH, HP3, Pa} we define residual currents and $\bar\partial$-closed
residual currents on $V$. By a residual current $\phi\in C_R^{(0,q)}(V,{\cal O}(\ell))$
of homogeneity $\ell$ we call a collection $\left\{\Phi_{\alpha}^{(0,q)}\right\}_{\alpha=0}^n$
of $C^{\infty}$ differential forms satisfying equalities
\begin{equation}\label{ResidualCurrent}
\Phi_{\alpha}=\left(\frac{z_{\beta}}{z_{\alpha}}\right)^{\ell}\Phi_{\beta}
+\sum_{k=1}^mF^{(\alpha)}_k
\cdot\Omega^{(\alpha\beta)}_k\ \mbox{on}\ U_{\alpha}\cap U_{\beta},
\end{equation}
acting on $\gamma\in {\cal E}^{(n,n-m-q)}\left(V,{\cal L}(-\ell)\right)$ by the formula
\begin{equation}\label{GlobalCurrent}
\langle\phi,\gamma\rangle=\sum_{\alpha}
\int_{U_{\alpha}}\vartheta_{\alpha}\gamma_{\alpha}\wedge\Phi_{\alpha}
\bigwedge_{k=1}^m\bar\partial\frac{1}{F^{(\alpha)}_k}
\stackrel{\text{def}}{=}\lim_{t\to 0}\sum_{\alpha}\int_{T^{\epsilon(t)}_{\alpha}}
\vartheta_{\alpha}\frac{\gamma_{\alpha}\wedge\Phi_{\alpha}}{\prod_{k=1}^m F^{(\alpha)}_k},
\end{equation}
where $\left\{\vartheta_{\alpha}\right\}_{\alpha=0}^n$ is a partition of unity
subordinate to the covering $\left\{U_{\alpha}\right\}_{\alpha=0}^n$, and
the limit in the right-hand side of \eqref{GlobalCurrent} is taken along an admissible path
in the sense of Coleff-Herrera \cite{CH}, i.e. an analytic map
$\epsilon:[0,1]\to \R^m$ satisfying conditions
\begin{equation}\label{admissible}
\begin{cases}
\lim_{t\to 0}\epsilon_m(t)=0,\\
{\dis \lim_{t\to 0}\frac{\epsilon_j(t)}{\epsilon^l_{j+1}(t)}=0,\
\mbox{for any}\ l\in\N }\ \text{and}\ j=1,\dots,m-1,
\end{cases}
\end{equation}
and
\begin{equation}\label{Tube}
T^{\epsilon(t)}_{\alpha}
=\left\{z\in U_{\alpha}:\ \left|F^{(\alpha)}_k(z)\right|=\epsilon_k(t)\ \text{for}\ k=1,\dots,m\right\}.
\end{equation}
\indent
A residual current $\phi$ of homogeneity $\ell$ we call $\bar\partial$-closed
$\left(\text{denoted}\ \phi\in Z_R^{(0,q)}(V,{\cal O}(\ell))\right)$,
if it satisfies the following condition
\begin{equation}\label{Closed}
\bar\partial\Phi_{\alpha}
=\sum_{k=1}^mF^{(\alpha)}_k\cdot\Omega^{(\alpha)}_k\ \mbox{on}\ U_{\alpha}.
\end{equation}

\indent
Below we present an extended version of Theorem 1 from \cite{HP4} that is used in this article.

\begin{theorem}\label{HomogeneousHodge} Let $V\subset \C\P^n$ be a reduced complete
intersection subvariety as in \eqref{Variety}. Then
\begin{itemize}
\item[(i)]
for an arbitrary $\phi\in Z_R^{(0,q)}\left(V, {\cal O}(\ell)\right)$ the following
representation holds
\begin{equation}\label{HomogeneousHomotopy}
\phi=\bar\partial I_q[\phi]+L_q[\phi],
\end{equation}
where $L_q[\phi]=0$ if $1\leq q<n-m$,
$L_{n-m}[\phi]\in  Z_R^{(0,n-m)}\left(V, {\cal O}(\ell)\right)$ is defined by formula
\begin{multline}\label{HomogeneousL}
L_{n-m}[\phi]=\sum_{0\leq r\leq d-n-1-\ell}C(n,m,d,r)\lim_{t\to 0}
\int_{\left\{|\zeta|=1,\left\{|P_k(\zeta)|=\epsilon_k(t)\right\}_{k=1}^m\right\}}
\langle{\bar z}\cdot\zeta\rangle^r\cdot\frac{\phi(\zeta)}{\prod_{k=1}^mP_k(\zeta)}\\
\bigwedge\det\left[{\bar z}\ \overbrace{Q(\zeta,z)}^{m}\
\overbrace{d{\bar z}}^{n-m}\right]\wedge\omega(\zeta)
\end{multline}
with $d=\sum_{k=1}^m\deg P_k$, and the current
$I_q[\phi]\in C^{(0,q-1)}\left(V, {\cal O}(\ell)\right)$ is defined by formula
\begin{multline}\label{HomogeneousI}
I_q[\phi]=C(n,q,m)\lim_{t\to 0}
\int_{\left\{|\zeta|=1,\left\{|P_k(\zeta)|=\epsilon_k(t)\right\}_{k=1}^m\right\}}
\frac{\phi(\zeta)}{\prod_{k=1}^mP_k(\zeta)}\\
\bigwedge\det\left[\frac{\bar z}{B^*(\zeta,z)}\ \frac{\bar\zeta}{B(\zeta,z)}\
\overbrace{Q(\zeta,z)}^{m}\
\overbrace{\frac{d{\bar z}}{B^*(\zeta,z)}}^{q-1}\
\overbrace{\frac{d{\bar\zeta}}{B(\zeta,z)}}^{n-m-q}\right]\wedge\omega(\zeta),
\end{multline}
where functions $\left\{Q_k^i(\zeta,z)\right\}$ for $k=1,\dots,m,\ i=0,\dots,n$ satisfy
\begin{equation}\label{QFunctions}
\left\{\begin{array}{ll}
P_k(\zeta)-P_k(z)=\sum_{i=0}^nQ_k^i(\zeta,z)\cdot\left(\zeta_i-z_i\right),\vspace{0.1in}\\
Q_k^i(\lambda\zeta,\lambda z)=\lambda^{\deg{P_k}-1}\cdot Q_k^i(\zeta,z)\
\mbox{for}\ \lambda\in\C,
\end{array}\right.
\end{equation}
and
$$B^*(\zeta,z)=\sum_{j=0}^n{\bar z}_j\cdot\left(\zeta_j-z_j\right),
\hspace{0.2in}B(\zeta,z)=\sum_{j=0}^n{\bar\zeta}_j\cdot\left(\zeta_j-z_j\right);$$
\item[(ii)]
a $\bar\partial$-closed residual current
$\phi\in Z_R^{(0,n-m)}\left(V, {\cal O}(\ell)\right)$
is $\bar\partial$-exact, i.e. there exists a current
$\psi\in C^{(0,n-m-1)}\left(V, {\cal O}(\ell)\right)$ such that $\phi=\bar\partial\psi$, iff
\begin{equation}\label{HodgeCondition}
L_{n-m}[\phi]=0.
\end{equation}
\end{itemize}
\end{theorem}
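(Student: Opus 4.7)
The plan is to extend the construction of Theorem~1 in \cite{HP4} (which treats the homogeneity zero case) to arbitrary $\ell$ by adapting the Cauchy--Fantappi\`e--Leray integral representation on $\C\P^n$. The basic ingredients are already assembled in the statement: the Bochner--Martinelli--type sections $\bar z/B^*(\zeta,z)$ and $\bar\zeta/B(\zeta,z)$ provide a Koppelman homotopy on the unit sphere in $\C^{n+1}$ (lifting $\C\P^n$), while the Hefer sections $Q_k^i(\zeta,z)/P_k(\zeta)$ from \eqref{QFunctions} force the kernel to land on $V$ after the Coleff--Herrera residue \eqref{GlobalCurrent}--\eqref{Tube} is applied along an admissible path \eqref{admissible}.

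First I would fix a representative of $\phi$ in a chart $U_\alpha$ and apply a Koppelman-type formula on the sphere $S^{2n+1}$ to the integrand $\phi(\zeta)/\prod_k P_k(\zeta)$, treating the residue as a limit along \eqref{admissible}. Differentiating under the residue sign and invoking Stokes on the complement of the tube $T_\alpha^{\epsilon(t)}$, the hypothesis \eqref{Closed} that $\bar\partial\phi$ lies in the ideal $(P_1,\ldots,P_m)$ is exactly what lets the interior $\bar\partial$ be absorbed into $\bar\partial I_q[\phi]$, leaving only the boundary contribution $L_q[\phi]$. The structural reason for $L_q[\phi]=0$ when $1\le q<n-m$ is a dimension/degeneracy count in the determinant kernel: only in top antiholomorphic bidegree $q=n-m$ do the $\bar\zeta$- and $d\bar\zeta$-columns both disappear, reducing the determinant in \eqref{HomogeneousI} to the purely holomorphic-in-$\zeta$ determinant $\det[\bar z\ Q(\zeta,z)\ d\bar z]$ of \eqref{HomogeneousL}; in intermediate bidegrees the $\bar\zeta$-factor creates an exact differential on the $|\zeta|=1$ cycle and integrates to zero. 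The role of the homogeneity $\ell$ is to record the degree imbalance in $\zeta$ between the numerator $\phi$ and the denominator $\prod P_k\cdot\omega(\zeta)$: after homogenizing the kernel using $1=\langle\bar z\cdot\zeta\rangle$ on $|\zeta|=1$, the excess degree $d-n-1-\ell$ is split as a binomial expansion in $r$, giving the finite sum with coefficients $C(n,m,d,r)$ in \eqref{HomogeneousL}. When $\ell>d-n-1$ the sum is empty and $L_{n-m}[\phi]\equiv 0$, which is the place where a Kodaira-type vanishing would later appear.

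For part~(ii), sufficiency is immediate: if $L_{n-m}[\phi]=0$ then $\psi=I_{n-m}[\phi]$ solves $\bar\partial\psi=\phi$ by~(i). For necessity I would test $L_{n-m}$ on an exact current $\phi=\bar\partial\psi$ and integrate by parts in $\zeta$ against the kernel of \eqref{HomogeneousL}. The critical observation is that, off $\{P_1=\cdots=P_m=0\}$, the kernel $\langle\bar z\cdot\zeta\rangle^r\det[\bar z\ Q(\zeta,z)\ d\bar z]\wedge\omega(\zeta)/\prod P_k(\zeta)$ is $\bar\partial_\zeta$-closed (it is antiholomorphic-polynomial in $\bar z$ and holomorphic in $\zeta$ modulo $\prod P_k$); hence the only contribution after Stokes comes from the Coleff--Herrera residue on the tube $T_\alpha^{\epsilon(t)}$, which in the admissible limit \eqref{admissible} annihilates a current of the form $\bar\partial\psi$ by the standard Coleff--Herrera vanishing for products of residues. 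Equivalently, this is a duality pairing of $L_{n-m}[\phi]$ against antiholomorphic sections of $\omega_V^\circ(-\ell)$, so $L_{n-m}$ factors through the quotient by $\bar\partial$-exact currents.

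The main obstacle will be the careful commutation of Stokes' theorem with the Coleff--Herrera limit along \eqref{admissible}, together with the bookkeeping needed to confirm that the collections $\{L_{n-m}[\phi]_\alpha\}$ and $\{I_q[\phi]_\alpha\}$ satisfy the cocycle condition \eqref{ResidualCurrent} with the correct weight $(z_\beta/z_\alpha)^\ell$ across the charts $U_{\alpha\beta}$. The Hefer functions $Q_k^i$ transform under rescaling with homogeneity $\deg P_k-1$ by the second line of \eqref{QFunctions}, and combined with $\omega(\zeta)$ of degree $n+1$ and the polynomial prefactor $\langle\bar z\cdot\zeta\rangle^r$, a direct but lengthy homogeneity count shows the total weight is $\ell$; executing this count, and showing independence of the representative of $\phi$ modulo the ideal $(P_1,\ldots,P_m)$, is the technical core of the argument.
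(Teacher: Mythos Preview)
Your overall strategy---reduce to Theorem~1 of \cite{HP4} and track the effect of the homogeneity $\ell$ through the Koppelman/Cauchy--Fantappi\`e machinery---is exactly the paper's approach, and your identification of the technical core (commuting Stokes with the Coleff--Herrera limit, and verifying the transition rule \eqref{ResidualCurrent} with weight $(z_\beta/z_\alpha)^\ell$) is accurate. The paper isolates precisely these two points as Lemmas~\ref{Homogeneity} and~\ref{ResidueReduction}.

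There is, however, one place where your account diverges from the actual mechanism and would not go through as written. You explain the finite range $0\le r\le d-n-1-\ell$ in \eqref{HomogeneousL} by ``homogenizing the kernel using $1=\langle\bar z\cdot\zeta\rangle$ on $|\zeta|=1$'' and then splitting an excess degree as a binomial expansion. But $\langle\bar z\cdot\zeta\rangle$ is not equal to $1$ on the sphere $|\zeta|=1$ (it equals $1$ only when $\zeta=z$), so there is no such identity to invoke, and the expansion producing the powers $\langle\bar z\cdot\zeta\rangle^r$ is a priori infinite, not a finite binomial sum. The truncation is a genuine vanishing result, and the paper proves it (Lemma~\ref{ZeroIntegrals}) by a different device: one applies Stokes to the form $\beta(\zeta,z)$ of \eqref{betaform} on the annular region between spheres $|\zeta|=a$ and $|\zeta|=1$ inside the tube $\{|P_k|=\epsilon_k|\zeta|^{\deg P_k}\}$, uses the homogeneity \eqref{PhiHomogeneity} together with the volume bound \eqref{VolumeEstimate} to show that the inner boundary term is $O(a^{r+n+1+\ell-d})$, and lets $a\to 0$. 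This kills all terms with $r>d-n-1-\ell$. Your sketch does not contain this idea, and without it the finiteness of the sum in \eqref{HomogeneousL} (hence the very well-definedness of $L_{n-m}$) is unjustified.
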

\medskip
We present here a sketch of proof of the Theorem. In this sketch we are concerned
with extending the validity of all lemmas and propositions comprising the proof of Theorem 1
in \cite{HP4} to the case of nonzero homogeneity.\\
\indent
In the lemma below we prove that the operators $L$ and $I$ defined in \cite{HP4}
preserve homogeneity, validating equalities (2.6), (2.11) and 
Proposition 2.3 from \cite{HP4} in the case of nonzero homogeneity.\\
\begin{lemma}\label{Homogeneity} If $\phi\in C_R^{(0,q)}(V,{\cal O}(\ell))$
is a residual current defined by a differential form $\Phi$ in a neighborhood of $V$ satisfying
\begin{equation}\label{lHomogeneity}
\Phi(\lambda\zeta)=\lambda^{\ell}\cdot\Phi(\zeta),
\end{equation}
then for the forms $L_{n-m}\left[\Phi\right]$ and $I_q\left[\Phi\right]$
defined by operators $L_{n-m}$ and $I_q$ in formulas (3.24) and (4.22) of \cite{HP4}
respectively, the following equalities hold
\begin{equation}\label{HomogeneityCondition}
\begin{aligned}
&L_{n-m}\left[\Phi\right](\lambda z)=\lambda^{\ell}\cdot L_{n-m}\left[\Phi\right](z),\\
&I_q\left[\Phi\right](\lambda z)=\lambda^{\ell}\cdot I_q\left[\Phi\right](z).
\end{aligned}
\end{equation}
\end{lemma}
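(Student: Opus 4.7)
The strategy is a direct change of variables in the integral formulas (3.24) and (4.22) of \cite{HP4} (which correspond here to \eqref{HomogeneousL} and \eqref{HomogeneousI} with $q=n-m$ and general $q$), exploiting the joint homogeneity of every factor of the integrand. For $L_{n-m}[\Phi](\lambda z)$ I would substitute $\zeta=\lambda\eta$ in the integration variable and track each source of $\lambda$-dependence. The hypothesis \eqref{lHomogeneity} gives $\Phi(\lambda\eta)=\lambda^{\ell}\Phi(\eta)$; the homogeneity of the $P_k$ produces a factor $\lambda^{-d}$ from the denominator, with $d=\sum_{k}\deg P_{k}$; the joint homogeneity \eqref{QFunctions} of the Hefer polynomials yields $\lambda^{d-m}$ from the $m$ middle columns of the determinant; the column $\overline{\lambda z}$ and the $n-m$ columns $d\overline{\lambda z}$ contribute $\bar\lambda^{n-m+1}$; the pairing satisfies $\langle\overline{\lambda z}\cdot\lambda\eta\rangle^{r}=|\lambda|^{2r}\langle\bar z\cdot\eta\rangle^{r}$; and the standard top form scales as $\omega(\lambda\eta)=\lambda^{n+1}\omega(\eta)$. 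Collecting these contributions, the integrand acquires an overall scalar $\lambda^{\ell}\cdot(\lambda\bar\lambda)^{n-m+1}|\lambda|^{2r}=\lambda^{\ell}|\lambda|^{2(n-m+1+r)}$.

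Simultaneously, the tube $\{|\zeta|=1,\,|P_{k}(\zeta)|=\epsilon_{k}(t)\}$ is transformed into $\{|\eta|=|\lambda|^{-1},\,|P_{k}(\eta)|=|\lambda|^{-\deg P_{k}}\epsilon_{k}(t)\}$. The rescaled radii still satisfy the admissibility conditions \eqref{admissible}, and since the Coleff--Herrera limit is independent of the chosen admissible family (a point established already in \cite{HP4}), the tube may be returned to $\{|\eta|=1,\,|P_{k}(\eta)|=\epsilon_{k}(t)\}$ after the real dilation $\eta\mapsto|\lambda|^{-1}\eta$ of the unit sphere. That real dilation, combined with the projective invariance of the full integrand used already in \cite{HP4}, absorbs precisely the residual factor $|\lambda|^{2(n-m+1+r)}$. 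What remains is $\lambda^{\ell}L_{n-m}[\Phi](z)$, term by term in the sum over $r$.

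For $I_{q}[\Phi](\lambda z)$ the argument is parallel under the same substitution $\zeta=\lambda\eta$. The new ingredients are $B^{\ast}(\zeta,z)=\sum_{j}\bar z_{j}(\zeta_{j}-z_{j})$ and $B(\zeta,z)=\sum_{j}\bar\zeta_{j}(\zeta_{j}-z_{j})$, which obey $B^{\ast}(\lambda\eta,\lambda z)=|\lambda|^{2}B^{\ast}(\eta,z)$ and $B(\lambda\eta,\lambda z)=|\lambda|^{2}B(\eta,z)$. Each appearance of $\bar z/B^{\ast}$, $\bar\zeta/B$, $d\bar z/B^{\ast}$ and $d\bar\zeta/B$ in the columns of the determinant in \eqref{HomogeneousI} then contributes a compensating $|\lambda|^{-2}$ that matches the $\bar\lambda$ or $\bar\zeta$ scaling in the numerator. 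Repeating the bookkeeping above, the holomorphic part of the scaling again collapses to $\lambda^{\ell}$, and the residual modulus factor is absorbed by the same contour deformation.

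The main obstacle is the careful exponent count across the many factors of the integrands and the verification that the real dilation of the sphere absorbs exactly the leftover power of $|\lambda|^{2}$. The latter step relies essentially on the invariance of the Coleff--Herrera limit under changes of admissible family, which is already built into the $\ell=0$ construction of \cite{HP4}; once this is granted, the numerology is mechanical and extends verbatim to arbitrary homogeneity, validating equalities (2.6), (2.11) and Proposition~2.3 of \cite{HP4} in the present setting.
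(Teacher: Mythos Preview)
Your strategy is the same as the paper's---a change of variables $\zeta=\lambda\eta$ in the integral followed by exponent bookkeeping---and your count of the overall factor $\lambda^{\ell}|\lambda|^{2(n-m+1+r)}$ is correct. The difference is that the paper restricts from the outset to $|\lambda|=1$. With that restriction the integration domain $\{|\zeta|=1,\,|P_k(\zeta)|=\epsilon_k(t)\}$ is preserved under $\zeta\mapsto\lambda\eta$, and the identity $\bar\lambda=\lambda^{-1}$ collapses $\lambda^{\ell}(\lambda\bar\lambda)^{n-m+1+r}$ directly to $\lambda^{\ell}$; no residual modulus term survives and no contour deformation is needed. This is all that is required here, since the operators $L_{n-m}$ and $I_q$ are evaluated on the unit sphere and ``homogeneity $\ell$'' in this framework is precisely $S^{1}$-equivariance.

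Your additional ``real dilation'' step, intended to treat general $\lambda$, has a gap. The Hefer coefficients $Q^{i}_{k}(\zeta,z)$ satisfy the \emph{joint} homogeneity $Q^{i}_{k}(\lambda\zeta,\lambda z)=\lambda^{\deg P_{k}-1}Q^{i}_{k}(\zeta,z)$ of \eqref{QFunctions}, but they are \emph{not} homogeneous in $\zeta$ alone. Consequently, under a real dilation $\eta\mapsto|\lambda|^{-1}\eta$ with $z$ held fixed, the $Q(\eta,z)$ columns of the determinant do not rescale, and the claimed absorption of the factor $|\lambda|^{2(n-m+1+r)}$ does not follow from the substitution you describe. (Indeed, composing your two substitutions $\zeta=\lambda\eta$ and $\eta=\mu/|\lambda|$ yields $\zeta=(\lambda/|\lambda|)\mu$, which is exactly the paper's $|\lambda|=1$ change of variables and establishes nothing beyond that case.) The simplest remedy is to assume $|\lambda|=1$ from the start, as the paper does; then the dilation step is unnecessary and your argument is complete.
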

\begin{proof}
To prove the preservation of homogeneity for operator $L_{n-m}$ we use
the following equality for $|\lambda|=1$, after changing variables to $\zeta=\lambda w$
\begin{multline*}
\int_{\left\{|\zeta|=1,\left\{|P_k(\zeta)|=\epsilon_k(t)\right\}_{k=1}^m\right\}}
\langle\bar{\lambda}{\bar z}\cdot\zeta\rangle^r
\cdot\frac{\Phi(\zeta)}{\prod_{k=1}^mP_k(\zeta)}
\bigwedge\det\left[\bar{\lambda}{\bar z}\ \overbrace{Q(\zeta,\lambda z)}^{m}\
\overbrace{\bar{\lambda}d{\bar z}}^{n-m}\right]\wedge\omega(\zeta)\\
=\int_{\left\{|\zeta|=1,\left\{|P_k(w)|=\epsilon_k(t)\right\}_{k=1}^m\right\}}
\langle\bar{\lambda}{\bar z}\cdot\lambda w\rangle^r
\cdot\frac{\Phi(\lambda w)}{\prod_{k=1}^mP_k(\lambda w)}
\bigwedge\det\left[\bar{\lambda}{\bar z}\ \overbrace{Q(\lambda w,\lambda z)}^{m}\
\overbrace{\bar{\lambda}d{\bar z}}^{n-m}\right]\wedge\omega(\lambda w)\\
=\lambda^{\ell-d}\bar{\lambda}^{n-m+1}\lambda^{d-m}\lambda^{n+1}
\int_{\left\{|\zeta|=1,\left\{|P_k(w)|=\epsilon_k(t)\right\}_{k=1}^m\right\}}
\langle{\bar z}\cdot w\rangle^r
\cdot\frac{\Phi(w)}{\prod_{k=1}^mP_k(w)}\\
\bigwedge\det\left[\bar{\lambda}{\bar z}\ \overbrace{Q(w,z)}^{m}\
\overbrace{d{\bar z}}^{n-m}\right]\wedge\omega(w)\\
=\lambda^{\ell}
\int_{\left\{|\zeta|=1,\left\{|P_k(w)|=\epsilon_k(t)\right\}_{k=1}^m\right\}}
\langle{\bar z}\cdot w\rangle^r
\cdot\frac{\Phi(w)}{\prod_{k=1}^mP_k(w)}
\bigwedge\det\left[\bar{\lambda}{\bar z}\ \overbrace{Q(w,z)}^{m}\
\overbrace{d{\bar z}}^{n-m}\right]\wedge\omega(w),\\
\end{multline*}
where $d=\sum_{k=1}^m\deg P_k$.\\
\indent
Similarly, we obtain equality
\begin{multline*}
\int_{\left\{|\zeta|=1,\left\{|P_k(w)|=\epsilon_k(t)\right\}_{k=1}^m\right\}}
\frac{\Phi(\lambda w)}{\prod_{k=1}^mP_k(\lambda w)}\\
\bigwedge\det\left[\frac{{\bar\lambda}\bar z}
{B^*(\lambda w,\lambda z)}\ \frac{\bar\lambda{\bar w}}{B(\lambda w,\lambda z)}\
\overbrace{Q(\lambda w,\lambda z)}^{m}\
\overbrace{\frac{{\bar\lambda}d{\bar z}}{B^*(\lambda w,\lambda z)}}^{q-1}\
\overbrace{\frac{{\bar\lambda}d{\bar w}}{B(\lambda w,\lambda z)}}^{n-m-q}\right]
\wedge\omega(\lambda w)\\
=\lambda^{\ell-d}\lambda^{d-m}{\bar\lambda}^{n-m+1}\lambda^{n+1}
\int_{\left\{|\zeta|=1,\left\{|P_k(w)|=\epsilon_k(t)\right\}_{k=1}^m\right\}}
\frac{\Phi(w)}{\prod_{k=1}^mP_k(w)}\\
\bigwedge\det\left[\frac{\bar z}
{B^*(w,z)}\ \frac{{\bar w}}{B(w,z)}\
\overbrace{Q(w,z)}^{m}\
\overbrace{\frac{d{\bar z}}{B^*(w,z)}}^{q-1}\
\overbrace{\frac{d{\bar w}}{B(w,z)}}^{n-m-q}\right]
\wedge\omega(w)\\
=\lambda^{\ell}\int_{\left\{|\zeta|=1,\left\{|P_k(w)|=\epsilon_k(t)\right\}_{k=1}^m\right\}}
\frac{\Phi(w)}{\prod_{k=1}^mP_k(w)}\\
\bigwedge\det\left[\frac{\bar z}
{B^*(w,z)}\ \frac{{\bar w}}{B(w,z)}\
\overbrace{Q(w,z)}^{m}\
\overbrace{\frac{d{\bar z}}{B^*(w,z)}}^{q-1}\
\overbrace{\frac{d{\bar w}}{B(w,z)}}^{n-m-q}\right]
\wedge\omega(w).
\end{multline*}
\end{proof}

\indent  To include the case of nonzero homogeneity Lemmas 3.2 and 3.3 in \cite{HP4}
have to be reformulated. In particular, Lemma 3.2 has to be replaced by the following Lemma.

\begin{lemma}\label{ResidueReduction} Let $V\subset \C\P^n$ be a reduced complete intersection
subvariety as in \eqref{Variety},
let $U\supset V$ be an open neighborhood
of $V$ in $\C\P^n$, and let $\Phi\in {\cal E}_c^{(0,n-m)}(U\cap U_{\alpha})$
be a differential form of homogeneity $\ell$ on $U\cap U_{\alpha}$ for
some $\alpha\in (0,\dots,n)$.\\
\indent
Then formula
\begin{equation}\label{SphericalIntegral}
\lim_{t\to 0}\int_{\left\{|\zeta|=\tau,\left\{|P_k(\zeta)|=\epsilon_k(t)\right\}_{k=1}^m\right\}}
\langle{\bar z}\cdot\zeta\rangle^r\cdot\frac{\Phi(\zeta)}{\prod_{k=1}^mP_k(\zeta)}
\wedge\det\left[{\bar z}\ \overbrace{Q(\zeta,z)}^{m}\
\overbrace{d{\bar z}}^{n-m}\right]\wedge\omega(\zeta),
\end{equation}
where $\left\{\epsilon_k(t)\right\}_{k=1}^m$ is an admissible path,
defines a differential form of homogeneity $\ell$ on $U$, real analytic with respect to $z$.\\
\indent
If $\Phi(\zeta)=\sum_{k=1}^m F^{(\alpha)}_k(\zeta)\Omega_k(\zeta)$ with
$\Omega_k\in {\cal E}_c^{(0,n-m)}(U\cap U_{\alpha})$,
then the limit in \eqref{SphericalIntegral} is equal to zero.
\end{lemma}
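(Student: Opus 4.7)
The lemma is the homogeneity-$\ell$ generalization of Lemma~3.2 in \cite{HP4}, and my plan is to inherit the structural arguments of that proof while tracking the extra homogeneity factor throughout. Three things must be checked: that the Coleff--Herrera limit in \eqref{SphericalIntegral} exists and depends real analytically on $z$; that the resulting form has homogeneity $\ell$ in $z$; and that the limit vanishes when $\Phi$ lies in the ideal generated by the $F^{(\alpha)}_k$.

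For the existence and real analyticity, note that for fixed $z$ and $t>0$ the integrand is a smooth $(2n+1)$-form on the real-analytic cycle $\left\{|\zeta|=\tau,\ |P_k(\zeta)|=\epsilon_k(t)\right\}$. The existence of the admissible-path limit and the uniform estimates in $z$ are verbatim those of Lemmas~3.1--3.2 in \cite{HP4}; the $1/\epsilon_k(t)^{N_k}$ growth on the tube is tamed by the hierarchy \eqref{admissible}. Real analyticity in $z$ then follows by differentiating under the integral, since the only $z$-dependence is through the polynomials $Q^i_k(\zeta,z)$, through the columns $\bar z$ and $d\bar z$, and through the scalar $\langle\bar z\cdot\zeta\rangle^r$.

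For the homogeneity identity I would substitute $\zeta=\lambda w$ in the integral associated to $(\lambda z)$ and collect powers of $\lambda$ and $\bar\lambda$ exactly as in the proof of Lemma~\ref{Homogeneity}: the factors are $\lambda^{\ell-d}$ from $\Phi(\lambda w)/\prod P_k(\lambda w)$, $\lambda^{d-m}$ from $\det Q(\lambda w,\lambda z)$ via \eqref{QFunctions}, $\bar\lambda^{n-m+1}$ from the $\bar z$ and $d\bar z$ columns, $\lambda^{n+1}$ from $\omega(\lambda w)$, and $\bar\lambda^r$ from $\langle\overline{\lambda z}\cdot\zeta\rangle^r$. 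For $|\lambda|=1$ these collapse to $\lambda^\ell$ and the sphere $|\zeta|=\tau$ maps to $|w|=\tau$, giving the identity directly. For the modulus of $\lambda$ the sphere $|\zeta|=\tau$ maps to $|w|=\tau/|\lambda|$, and I would invoke Stokes' theorem to deform back to $|w|=\tau$: the Cauchy--Fantappi\`e--Leray kernel in the determinant is $\bar\partial_\zeta$-closed outside $V\cup\{\zeta=z\}$, so the two integrals agree.

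For the vanishing assertion, if $\Phi=\sum_k F^{(\alpha)}_k\Omega_k$ on $U\cap U_\alpha$, the identity $F^{(\alpha)}_k(\zeta)=P_k(\zeta)/\zeta_\alpha^{\deg P_k}$ cancels the pole of $1/P_k(\zeta)$ in the $k$-th summand, leaving a form holomorphic across $\{P_k=0\}$. The corresponding one-factor Coleff--Herrera residue then reduces to the one-variable Cauchy integral of a form holomorphic across $\{P_k=0\}$, which vanishes; summing over $k$ gives the conclusion. The main obstacle I anticipate lies in the homogeneity step: the Stokes deformation from $|w|=\tau/|\lambda|$ to $|w|=\tau$ must be carried out while respecting the tubes $|P_k(w)|=\epsilon_k(t)$ that make up the rest of the cycle, and one must verify that the interpolating region can be pushed through the admissible-path limit with uniform estimates compatible with the hierarchy \eqref{admissible}. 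All other steps are essentially routine bookkeeping with the homogeneity factors already computed in Lemma~\ref{Homogeneity}.
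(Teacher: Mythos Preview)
Your overall strategy---inherit the structure of Lemma~3.2 in \cite{HP4} and track the extra homogeneity factor---is exactly what the paper does, but you depart from the paper at two points, one of which is a genuine detour and the other a missing reduction.

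First, the Stokes deformation for $|\lambda|\neq 1$ is not needed and not used in the paper. The paper simply cites Lemma~\ref{Homogeneity} (its Lemma~2.1), whose proof is carried out only for $|\lambda|=1$; that is all that is required for the notion of homogeneity in play here (the transformation law under the $S^1$-action on the sphere, together with the fixed bidegree, pins down the full behaviour). Your self-identified ``main obstacle'' is therefore a phantom: you can drop the entire deformation argument and just quote the $|\lambda|=1$ computation.

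Second, and more substantively, the paper's proof hinges on an explicit passage to nonhomogeneous coordinates $w_j=\zeta_j/\zeta_0$ on the sphere (formula~\eqref{LocalIntegralEquality}), which separates off the $S^1$-fibre in $\zeta_0=\rho_0(w)e^{i\phi_0}$ and rewrites the spherical integral as an ordinary Coleff--Herrera tube integral in the affine chart, with the homogeneity $\ell$ appearing only as a shift in the exponent of $\zeta_0$. This reduction is what makes the appeal to Theorem~1.7.6(2) of \cite{CH} legitimate for the vanishing statement: the Coleff--Herrera machinery is formulated for tubes $\{|F_k(w)|=\epsilon_k\}$ in an open set of $\C^n$, not for cycles cut out on a sphere in $\C^{n+1}$. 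Your direct pole-cancellation argument (``the $k$-th pole disappears, so the residue vanishes'') is morally the content of that theorem, but without the coordinate change you have not placed yourself in the setting where it applies. Insert the nonhomogeneous-coordinate step before invoking the Coleff--Herrera vanishing, and the proof goes through exactly as in the paper.
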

\begin{proof}
Preservation of homogeneity follows from the first equality in \eqref{HomogeneityCondition}
in Lemma~\ref{Homogeneity}. As in Lemma 3.2 in \cite{HP4} without loss of generality
we can consider only the case $\alpha=0$, i.e.
$\Phi\in {\cal E}_c^{(0,n-m)}\left(U\cap \{\zeta_0\neq 0\}\right)$.
Then formula (3.15) in \cite{HP4} for a form $\Phi$ satisfying
equality \eqref{lHomogeneity} has to be replaced by the following formula
\begin{multline}\label{LocalIntegralEquality}
\int_{\left\{|\zeta|=\tau,\left\{|P_k(\zeta)|=\epsilon_k(t)\right\}_{k=1}^m\right\}}
\left({\bar z}_0+\sum_{j=1}^n{\bar z}_j\cdot w_j\right)^r\cdot
\frac{\Phi(\zeta)}{\prod_{k=1}^mP_k(\zeta)}\\
\wedge\det\left[{\bar z}\ \overbrace{Q(\zeta,z)}^{m}\ \overbrace{d{\bar z}}^{n-m}\right]
\wedge \left(\zeta_0^{n+r}d\zeta_0\right)\bigwedge_{j=1}^n dw_j\\
=\int_{\left\{|\zeta|=\tau,\left\{|P_k(\zeta)|=\epsilon_k(t)\right\}_{k=1}^m\right\}}
\left({\bar z}_0+\sum_{j=1}^n{\bar z}_j\cdot w_j\right)^r\cdot
\frac{\Phi(\zeta)}{\prod_{k=1}^mP_k(\zeta)}\\
\wedge\det\left[{\bar z}\ \overbrace{Q(\zeta,z)}^{m}\ \overbrace{d{\bar z}}^{n-m}\right]
\wedge \left(\zeta_0^{n+r}d\zeta_0\right)\bigwedge_{j=1}^n dw_j\\
=\int_{\left\{|\zeta|=\tau,\left\{|F_k(w)|\cdot\chi_k(w)=\epsilon_k(t)\right\}_{k=1}^m\right\}}
\left({\bar z}_0+\sum_{j=1}^n{\bar z}_j\cdot w_j\right)^r
\left(\zeta_0^{n+r+\ell-\sum_{k=1}^m \deg P_k}d\zeta_0\right)\\
\times\frac{\Phi(w)}{\prod_{k=1}^m F_k(w)}
\wedge\det\left[{\bar z}\ \overbrace{Q(e^{i\phi_0},w,z)}^{m}\
\overbrace{d{\bar z}}^{n-m}\right]\bigwedge_{j=1}^n dw_j\\
=i\int_0^{2\pi}e^{i\left(n+r+\ell-\sum_{k=1}^m \deg P_k+1\right)\phi_0}d\phi_0
\int_{\left\{w\in U_0,\left\{|F_k(w)|\cdot\chi_k(w)=\epsilon_k(t)\right\}_{k=1}^m\right\}}
\left({\bar z}_0+\sum_{j=1}^n{\bar z}_j\cdot w_j\right)^r\\
\times\rho_0(w)^{n+r+\ell-\sum_{k=1}^m \deg P_k+1}
\cdot\frac{\Phi(w)}{\prod_{k=1}^m F_k(w)}
\wedge\det\left[{\bar z}\ \overbrace{Q(e^{i\phi_0},w,z)}^{m}\
\overbrace{d{\bar z}}^{n-m}\right]\bigwedge_{j=1}^n dw_j,
\end{multline}
where we used nonhomogeneous coordinates
$\left\{w_i=\zeta_i/\zeta_0\right\}_{i=1}^n$
in the subset $\*S^{2n+1}(\tau)\setminus\left\{\zeta_0=0\right\}$
of the sphere $\*S^{2n+1}(\tau)$ of radius $\tau$ in $\C^{n+1}$, notation
$\zeta_0=\rho_0(w)\cdot e^{i\phi_0}$ with
$$\rho_0(w)=\frac{\tau}{\sqrt{1+\sum_{i=1}^n|w_i|^2}}$$
on $\*S^{2n+1}(\tau)$, nonhomogeneous polynomials
\begin{equation*}
F_k(w)=P_k(\zeta)/\zeta_0^{\deg P_k}
\end{equation*}
in $\*S^{2n+1}(\tau)\setminus\left\{\zeta_0=0\right\}$,
and equality \eqref{lHomogeneity}.\\
\indent
The last statement of the Lemma follows as in Lemma 3.2 of \cite{HP4} from application of Theorem 1.7.6(2)
in \cite{CH} to the interior integral in the right-hand side of \eqref{LocalIntegralEquality}.
\end{proof}

\indent
Lemma 3.3 has to be replaced by the following Lemma.

\begin{lemma}\label{ZeroIntegrals} Let $\phi\in Z_R^{(0,q)}(V,{\cal O}(\ell))$
be a $\bar\partial$-closed residual current defined
by a collection of forms $\left\{\Phi^{(0,n-m)}_{\alpha}\right\}_{\alpha=0}^n$
of homogeneity $\ell$ on a neighborhood $U$ of the reduced subvariety $V$
as in \eqref{Variety} satisfying \eqref{ResidualCurrent} and \eqref{Closed},
and let $\Phi(\zeta)=\sum_{\alpha=0}^n\vartheta_{\alpha}(\zeta)\Phi_{\alpha}(\zeta)$
be a differential form of homogeneity $\ell$ on $U$.\\
\indent
Then for an arbitrary
$\gamma\in {\cal E}^{(n,0)}\left(V,{\cal L}(-\ell)\right)$
the equality
\begin{multline}\label{ZeroIntegral}
\lim_{\tau\to 0}\int_{T^{\delta(\tau)}_{\beta}}
\frac{\gamma(z)}{\prod_{k=1}^m F^{(\beta)}_k(z)}
\bigwedge\left(\lim_{t\to 0}\int_{\left\{|\zeta|=1,\left\{|P_k(\zeta)|=\epsilon_k(t)\right\}_{k=1}^m\right\}}
\langle{\bar z}\cdot\zeta\rangle^r\right.\\
\times\frac{\Phi(\zeta)}{\prod_{k=1}^mP_k(\zeta)}
\left.\wedge\det\left[{\bar z}\ \overbrace{Q(\zeta,z)}^{m}\
\overbrace{d{\bar z}}^{n-m}\right]\wedge\omega(\zeta)\right)=0
\end{multline}
holds unless
\begin{equation}\label{IndexCondition}
r\leq\sum_{k=1}^m\deg P_k-\ell-n-1.
\end{equation}
\end{lemma}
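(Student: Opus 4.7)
The plan is to reduce \eqref{ZeroIntegral} to the local angular-integration vanishing already implicit in the proof of Lemma~\ref{ResidueReduction}, and to observe that under the hypothesis $r > \sum_{k=1}^m \deg P_k - \ell - n - 1$ the relevant angular exponent is forced to be strictly positive.

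First I would decompose $\Phi = \sum_{\alpha=0}^n \vartheta_\alpha \Phi_\alpha$ via the partition of unity and use linearity of the inner integral to reduce to proving the vanishing of \eqref{ZeroIntegral} when $\Phi$ is replaced by a single compactly supported summand $\vartheta_\alpha \Phi_\alpha \in \mathcal{E}_c^{(0,n-m)}(U \cap U_\alpha)$, which satisfies $(\vartheta_\alpha \Phi_\alpha)(\lambda \zeta) = \lambda^\ell (\vartheta_\alpha \Phi_\alpha)(\zeta)$ on the support of $\vartheta_\alpha$. As in Lemma~\ref{ResidueReduction}, we may take $\alpha = 0$ without loss of generality.

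Next I would introduce the nonhomogeneous coordinates $w_i = \zeta_i/\zeta_0$ and write $\zeta_0 = \rho_0(w) e^{i\phi_0}$ with $\rho_0(w) = 1/\sqrt{1 + \sum_{i=1}^n |w_i|^2}$ on the sphere $\*S^{2n+1} \setminus \{\zeta_0 = 0\}$. Tracking the homogeneity in $\zeta_0$ of every factor of the integrand --- namely $\Phi$ contributes $\zeta_0^{\ell}$, the denominator $\prod_k P_k(\zeta)$ contributes $\zeta_0^{-d}$, the columns $Q(\zeta,z)$ of the determinant contribute $\zeta_0^{d-m}$, the holomorphic volume form $\omega(\zeta)$ contributes $\zeta_0^{n+1}$ after rewriting in $w$-coordinates, and the additional factor $\langle \bar z \cdot \zeta\rangle^r = \zeta_0^{r}(\bar z_0 + \sum_{j=1}^n w_j \bar z_j)^{r}$ contributes $\zeta_0^{r}$ --- yields exactly the factorization displayed in \eqref{LocalIntegralEquality}. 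After Fubini in the angular variable, the inner integral splits off the scalar angular factor
$$i \int_0^{2\pi} e^{i(n + r + \ell - d + 1)\phi_0}\, d\phi_0,$$
which equals zero whenever $n + r + \ell - d + 1 \neq 0$. Under the hypothesis $r > d - \ell - n - 1$ this exponent is a strictly positive integer, so the angular integral vanishes. Consequently the inner integral is identically zero as a form of $z$, and the outer pairing with $\gamma$ in \eqref{ZeroIntegral} is trivially zero.

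The main obstacle is the careful bookkeeping of the $\zeta_0$-homogeneity when passing to the $w$-coordinates: one must verify that the $\bar z$-column of the determinant and the factor $\langle \bar z \cdot \zeta\rangle^r$ truly concentrate their $\phi_0$-dependence in a single power of $e^{i\phi_0}$, with no hidden $\bar \zeta_0$ contamination that would shift the critical exponent. Once this is verified, the argument is parallel to Lemma~3.3 of \cite{HP4}, the only new ingredient being the additional $\ell$-shift of the critical exponent that sharpens the cutoff from $r \le d - n - 1$ to $r \le d - \ell - n - 1$. Note that the $\bar\partial$-closedness hypothesis on $\phi$ is not used in this calculation itself; it is required only for consistency of the residual current across charts and for the global conclusions drawn from the lemma.
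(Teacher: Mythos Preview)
Your approach is genuinely different from the paper's and, once a bookkeeping slip is fixed, it works. The paper does \emph{not} argue via a single angular harmonic: it applies Stokes' theorem to the form $\beta$ of \eqref{betaform} on the shell $\{a<|\zeta|<1,\ |P_k(\zeta)|=\epsilon_k|\zeta|^{\deg P_k}\}$, kills the small-sphere integral as $a\to 0$ by the volume estimate \eqref{VolumeEstimate} and homogeneity \eqref{PhiHomogeneity}, and then disposes of the interior terms using \eqref{Closed}, \eqref{ResidualCurrent}, and Lemma~\ref{ResidueReduction}. In particular the paper's proof genuinely consumes the $\bar\partial$-closedness of $\phi$, whereas your circle-action argument does not --- so your route, once corrected, actually yields a slightly stronger statement at lower cost.

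The slip is in your treatment of the $Q$-columns. The polynomials $Q_k^i(\zeta,z)$ are homogeneous of degree $\deg P_k-1$ \emph{jointly} in $(\zeta,z)$, not in $\zeta$ alone; with $z$ fixed, each $Q_k^i(\zeta,z)$ is a polynomial in $\zeta_0$ of degree at most $\deg P_k-1$, typically containing all lower-order terms. This is precisely why formula \eqref{LocalIntegralEquality} retains $\det\!\left[\bar z\ Q(e^{i\phi_0},w,z)\ d\bar z\right]$ inside the $\phi_0$-integral rather than extracting a pure power of $\zeta_0$. Consequently the angular integrand is not a single harmonic $e^{i(n+r+\ell-d+1)\phi_0}$ but a finite sum of harmonics with exponents ranging over the interval $\bigl[n+r+\ell-d+1,\ n+r+\ell-m+1\bigr]$. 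Under the hypothesis $r>d-\ell-n-1$ every such exponent is strictly positive, so each term integrates to zero and your conclusion stands. You should also note that your listed contributions sum to $n+r+\ell-m+1$, not to the exponent $n+r+\ell-d+1$ that you then quote from \eqref{LocalIntegralEquality}; the discrepancy is exactly the phantom $\zeta_0^{d-m}$ you attributed to $Q$.
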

\begin{proof} We notice that for all values of $a>0$
and $\epsilon=\left(\epsilon_1,\dots,\epsilon_k\right)$ the sets
$$S( a)=\left\{\zeta\in \*S^{2n+1}(a):
\left\{ \left|P_k(\zeta)\right|=\epsilon_k\cdot a^{\deg P_k}\right\}_{k=1}^m\right\}$$
are real analytic subvarieties of $\*S^{2n+1}(a)$ of real dimension $2n+1-m$
satisfying
\begin{equation}\label{VolumeEstimate}
c\cdot a^{2n+1-m}\cdot\text{Volume}\left(S(1)\right)
<\text{Volume}_{2n+1-m}\left(S(a)\right)
<C\cdot a^{2n+1-m}\cdot\text{Volume}\left(S(1)\right).
\end{equation}
\indent
We denote
$$\Phi_{\alpha}(\zeta,z)=\langle{\bar z}\cdot\zeta\rangle^r\cdot
\frac{\Phi_{\alpha}(\zeta)}{\prod_{k=1}^mP_k(\zeta)}
\det\left[{\bar z}\ \overbrace{Q(\zeta,z)}^{m}\ \overbrace{d{\bar z}}^{n-m}\right]
\wedge\omega(\zeta),$$
and apply the Stokes' formula to the differential form
\begin{equation}\label{betaform}
\beta(\zeta,z)=\sum_{\alpha=0}^n\vartheta_{\alpha}(\zeta)\Phi_{\alpha}(\zeta,z)
=\sum_{\alpha=0}^n\langle{\bar z}\cdot\zeta\rangle^r
\cdot\frac{\vartheta_{\alpha}(\zeta)\Phi_{\alpha}(\zeta)}
{\prod_{k=1}^mP_k(\zeta)}
\det\left[{\bar z}\ \overbrace{Q(\zeta,z)}^{m}\ \overbrace{d{\bar z}}^{n-m}\right]\wedge\omega(\zeta)
\end{equation}
on the variety
$$\left\{\zeta\in \C^{n+1}:
\left\{ \left|P_k(\zeta)\right|=\epsilon_k\cdot|\zeta|^{\deg P_k}\right\}_{k=1}^m,\
a<|\zeta|<1\right\}$$
with the boundary
\begin{equation*}
\Big\{\zeta:|\zeta|=a,\
\left\{|P_k(\zeta)|=\epsilon_k\cdot a^{\deg P_k}\right\}_{k=1}^m\Big\}
\bigcup\Big\{\zeta:|\zeta|=1,\ \left\{|P_k(\zeta)|=\epsilon_k\right\}_{k=1}^m\Big\}.
\end{equation*}
Then using equality \eqref{Closed}
we obtain the equality
\begin{multline}\label{epsilonStokes}
\int_{\left\{|\zeta|=1, \left\{|P_k(\zeta)|=\epsilon_k(t)\right\}_{k=1}^m\right\}}\beta(\zeta,z)
-\int_{\left\{|\zeta|=a,\
\left\{|P_k(\zeta)|=\epsilon_k(t)\cdot a^{\deg P_k}\right\}_{k=1}^m\right\}}\beta(\zeta,z)\\
=\sum_{\alpha=0}^n\sum_{j=1}^m\int_{a}^1d\tau
\int_{\left\{|\zeta|=\tau, \left\{|P_k(\zeta)|=\epsilon_k(t)\cdot\tau^{\deg P_k}\right\}_{k=1}^m\right\}}
F^{(\alpha)}_j(\zeta)\cdot\beta_j^{(\alpha)}(\zeta,z)\\
+\sum_{\alpha=0}^n\int_{a}^1d\tau
\int_{\left\{|\zeta|=\tau, \left\{|P_k(\zeta)|=\epsilon_k(t)\cdot\tau^{\deg P_k}\right\}_{k=1}^m\right\}}d|\zeta|\interior
\Big(\bar\partial\vartheta_{\alpha}(\zeta)\wedge\Phi_{\alpha}(\zeta,z)\Big)
\end{multline}
for arbitrary $t$ and $0<a<1$.\\
\indent
Using estimate \eqref{VolumeEstimate} and the homogeneity property
\begin{equation}\label{PhiHomogeneity}
\Phi_{\beta}(t\cdot\zeta)={\bar t}^{-(n-m)}\cdot t^{\ell}\cdot\Phi_{\beta}(\zeta)
\end{equation}
of the coefficients of $\Phi^{(0,n-m)}$ from Proposition 1.1 in \cite{HP1} we obtain that if
\begin{equation}\label{IndexPositive}
r+n+1+\ell-\sum_{k=1}^m\deg P_k >0,
\end{equation}
then
\begin{equation*}
\left|\int_{\left\{|\zeta|= a,\
\left\{|P_k(\zeta)|=\epsilon_k(t)\cdot a^{\deg P_k}\right\}_{k=1}^m\right\}}
\beta(\zeta,z)\right|
<C^{\prime}(\epsilon)\cdot a^{r+2n+1-m-(n-m)
+\ell-\sum_{k=1}^m\deg P_k}\longrightarrow 0
\end{equation*}
as $t$ is fixed and $a\to 0$.\\
\indent
For the first sum of integrals in the right-hand side of \eqref{epsilonStokes} we have
\begin{multline}\label{AreaIntegralOne}
\Bigg|\int_{ a}^1d\tau\int_{\left\{|\zeta|=\tau,\
\left\{|P_k(\zeta)|=\epsilon_k(t)\cdot\tau^{\deg P_k}\right\}_{k=1}^m\right\}}
F^{(\alpha)}_j(\zeta)\cdot\beta_j^{(\alpha)}(\zeta,z)\Bigg|\\
<C\int_{ a}^1d\tau\cdot\tau^{r+2n+1-m-(n-m)+\ell-\sum_{k=1}^m\deg P_k}\\
\times\int_{\left\{|\zeta|=\tau,\
\left\{|P_k(\zeta)|=\epsilon_k(t)\cdot\tau^{\deg P_k}\right\}_{k=1}^m\right\}}
F^{(\alpha)}_j(\zeta)\cdot\left(d|\zeta|\interior\beta_j^{(\alpha)}(\zeta,z)\right)\\
<C^{\prime}(\epsilon)\frac{\left(1- a^{r+n+2+\ell-\sum_{k=1}^m\deg P_k}\right)}
{r+n+2+\ell-\sum_{k=1}^m\deg P_k}\longrightarrow 0
\end{multline}
as $t\to 0$, since $a<1$, condition \eqref{IndexPositive} is satisfied,
and $C^{\prime}(\epsilon)\to 0$ as $t\to 0$ by Lemma~\ref{ResidueReduction}.\\
\indent
For the second sum of integrals in the right-hand side of \eqref{epsilonStokes}
we use equality
\begin{multline*}
\lim_{t\to 0}
\sum_{\left\{\beta:\ U_{\beta}\cap U_{\alpha}\neq\emptyset\right\}}\int_{a}^1d\tau
\int_{\left\{\zeta_{\alpha}\neq 0,|\zeta|=\tau, \left\{|P_k(\zeta)|
=\epsilon_k(t)\cdot\tau^{\deg P_k}\right\}_{k=1}^m\right\}}
d|\zeta|\interior \Big(\bar\partial\vartheta_{\beta}(\zeta)\wedge\Phi_{\beta}(\zeta,z)\Big)\\
=\lim_{t\to 0}
\int_{a}^1d\tau\sum_{\left\{\beta:\ U_{\beta}\cap U_{\alpha}\neq\emptyset\right\}}
\int_{\left\{\zeta_{\alpha}\neq 0,|\zeta|=\tau, \left\{|P_k(\zeta)|
=\epsilon_k(t)\cdot\tau^{\deg P_k}\right\}_{k=1}^m\right\}}
\langle{\bar z}\cdot\zeta\rangle^r\\
\times d|\zeta|\interior\left(\bar\partial\vartheta_{\beta}(\zeta)
\wedge\frac{\Phi_{\beta}(\zeta)}{\left[\prod_{k=1}^mP_k(\zeta)\right]\big|_{\beta}}
\bigwedge\det\left[{\bar z}\ \overbrace{Q(\zeta,z)}^{m}\
\overbrace{d{\bar z}}^{n-m}\right]\wedge\omega(\zeta)\right)\\
=\lim_{t\to 0}
\int_{a}^1d\tau\sum_{\left\{\beta:\ U_{\beta}\cap U_{\alpha}\neq\emptyset\right\}}
\int_{\left\{\zeta_{\alpha}\neq 0,|\zeta|=\tau, \left\{|P_k(\zeta)|
=\epsilon_k(t)\cdot\tau^{\deg P_k}\right\}_{k=1}^m\right\}}
\langle{\bar z}\cdot\zeta\rangle^r\\
\times d|\zeta|\interior\left(\bar\partial\vartheta_{\beta}(\zeta)\big|_{U_{\alpha}}
\wedge\frac{\Phi_{\alpha}(\zeta)}
{\left[\prod_{k=1}^mP_k(\zeta)\right]\big|_{\alpha}}
\wedge\det\left[{\bar z}\ \overbrace{Q(\zeta,z)}^{m}\
\overbrace{d{\bar z}}^{n-m}\right]\wedge\omega(\zeta)\right)\\
=\lim_{t\to 0}
\int_{a}^1d\tau\int_{\left\{\zeta_{\alpha}\neq 0,|\zeta|=\tau, \left\{|P_k(\zeta)|
=\epsilon_k(t)\cdot\tau^{\deg P_k}\right\}_{k=1}^m\right\}}
\langle{\bar z}\cdot\zeta\rangle^r\\
\times d|\zeta|\interior
\left(\left[\sum_{\left\{\beta:\ U_{\beta}\cap U_{\alpha}\neq\emptyset\right\}}
\bar\partial\vartheta_{\beta}(\zeta)\big|_{U_{\alpha}}\right]
\wedge\frac{\Phi_{\alpha}(\zeta)}
{\left[\prod_{k=1}^mP_k(\zeta)\right]\big|_{\alpha}}
\bigwedge\det\left[{\bar z}\ \overbrace{Q(\zeta,z)}^{m}\
\overbrace{d{\bar z}}^{n-m}\right]\wedge\omega(\zeta)\right)=0.
\end{multline*}
which follows from equality \eqref{ResidualCurrent}, Lemma~\ref{ResidueReduction},
and the transformation formula for the Grothendieck's residue from Proposition 4.2 in \cite{HP1}
(for isolated singularities see \cite{GH,T}).
\end{proof}
\indent
This completes the sketch of proof of Theorem~\ref{HomogeneousHodge}.

\indent
Below we formulate a corollary of Theorem~\ref{HomogeneousHodge}, which will
be used in what follows. This corollary gives an explicit form of the
{\it Hodge-Kodaira Vanishing Theorem}
for projective complete intersections (see III.7.15 in \cite{Ha2} and \S 2.1 of \cite{GH}.
\begin{corollary}\label{PositiveHomogeneity}
If homogeneity $\ell$ satisfies inequality
\begin{equation}\label{PositiveCondition}
\ell>d-n-1,
\end{equation}
then the operator in the right-hand side of equality \ref{HomogeneousL} is zero,
\begin{equation}\label{ZeroCohomology}
\dim H^{n-m}_R\left(V, {\cal O}(\ell)\right)=0,
\end{equation}
and for any $\phi\in Z_R^{(0,n-m)}\left(V, {\cal O}(\ell)\right)$
\begin{equation}\label{DbarSolution}
\phi=\bar\partial I_{n-m}[\phi].
\end{equation}
\end{corollary}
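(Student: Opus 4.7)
My plan is to derive all three assertions simultaneously by reading them off the defining sum of $L_{n-m}$ in \eqref{HomogeneousL}. Under the hypothesis $\ell > d-n-1$ the range $0 \leq r \leq d-n-1-\ell$ is empty, hence the sum is vacuous and $L_{n-m}[\phi] = 0$ identically as an operator on $Z_R^{(0,n-m)}(V,\mathcal{O}(\ell))$. This is the first assertion, and essentially no calculation is required beyond noting that the index set has collapsed.

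Once $L_{n-m} \equiv 0$ is in hand, I would substitute this into the decomposition from Theorem \ref{HomogeneousHodge}(i) in the top-degree case $q = n-m$, namely
\begin{equation*}
\phi = \bar\partial I_{n-m}[\phi] + L_{n-m}[\phi],
\end{equation*}
which immediately reduces to \eqref{DbarSolution}. Since every $\bar\partial$-closed residual current of bidegree $(0,n-m)$ and homogeneity $\ell$ is thereby exhibited as $\bar\partial$-exact, the residual cohomology $H^{n-m}_R(V,\mathcal{O}(\ell))$ vanishes, giving \eqref{ZeroCohomology}.

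Conceptually, the truncation $r \leq d-n-1-\ell$ in \eqref{HomogeneousL} is forced by the vanishing condition \eqref{IndexCondition} of Lemma \ref{ZeroIntegrals}; the hypothesis \eqref{PositiveCondition} is precisely the statement that every nonnegative $r$ falls into the forbidden range, so no residual obstruction can survive. There is no real obstacle to the argument: once Theorem \ref{HomogeneousHodge} is granted, the corollary is a direct bookkeeping of when the summation range in the definition of $L_{n-m}$ becomes empty, and the Hodge--Kodaira vanishing assertion \eqref{ZeroCohomology} is the cohomological rephrasing of this emptiness.
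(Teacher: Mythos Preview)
Your argument is correct and matches the paper's intended reasoning: the corollary is stated without proof precisely because it follows immediately from Theorem~\ref{HomogeneousHodge} by observing that the summation range $0\leq r\leq d-n-1-\ell$ in \eqref{HomogeneousL} is empty when $\ell>d-n-1$, forcing $L_{n-m}\equiv 0$ and hence $\phi=\bar\partial I_{n-m}[\phi]$. There is nothing to add.
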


\section{Explicit Hodge decomposition on nonsingular algebraic curves.}

\indent
In order to construct an explicit Hodge decomposition on an arbitrary nonsingular
algebraic curve ${\cal X}$ we use the existence of an immersion
(see \cite{Kd2}, 1.4 in \cite{GH}, IV.3 in \cite{Ha2})
\begin{equation}\label{varrhoMap}
{\cal X}\stackrel{\varrho}{\rightarrow}\C\P^2,
\end{equation}
such that ${\cal C}=\varrho\left({\cal X}\right)$ is a plane curve with at most nodes as singularities.
Our construction of the sought decomposition on ${\cal X}$ will be based on a similar
decomposition on ${\cal C}$, which exists according to Theorem~\ref{HomogeneousHodge}.\\
\indent
We construct a linear map
$\varrho_*:{\cal E}^{(0,1)}({\cal X})\rightarrow Z_R^{(0,1)}\left({\cal C}\right)$.
Here and below we use notations $\varrho_*$ and $\varrho^*$ for induced direct
and respectively inverse maps on functions and differential forms.
Since $\varrho$ is biholomorphic everywhere outside of nodal points,
we have to describe $\varrho_*$ only in the neighborhoods
of those points. Let $p\in {\cal C}$ be such nodal point with $p_1,p_2\in {\cal X}$ such that
$\varrho(p_1)=\varrho(p_2)=p$.
Let $z_1,z_2$ be local coordinates at $p\in U\subset \C\P^2$, such that
$z_1(p)=z_2(p)=0$, and such that
$${\cal C}\cap U={\cal C}_1\cup{\cal C}_2,$$
where ${\cal C}_1=\left\{q\in U: z_1(q)=0\right\}$, and
${\cal C}_2=\left\{q\in U: z_2(q)=0\right\}$.\\
\indent
Let $\phi\in {\cal E}^{(0,1)}({\cal X})$ be a smooth $\bar\partial$-closed
form on ${\cal X}$ with local representations
\begin{equation*}
\phi\Big|_{p_1}=\phi_1(z)d{\bar z}_1,\hspace{0.2in}\phi\Big|_{p_2}=\phi_2(z)d{\bar z}_2
\end{equation*}
on ${\cal X}$. Let $\widetilde{\phi}_1,\widetilde{\phi}_2$ be extensions of $\phi_1$
and $\phi_2$ to $U$ such that
\begin{equation*}
\widetilde{\phi_j}\Big|_{{\cal C}_j}=\phi_j,\hspace{0.1in}
\bar\partial_{z_k}\widetilde{\phi_j}=0\ \text{for}\ k\neq j.
\end{equation*}
Then the differential form
\begin{equation}\label{DirectImage}
\phi_*=\widetilde{\phi}_1d{\bar z}_1+\widetilde{\phi}_2d{\bar z}_2
\end{equation}
defines a $\bar\partial$-closed residual current on ${\cal C}$
in the neighborhood $U\ni p$. Current $\phi_*$ is $\bar\partial$-exact in $U$ since
for the functions $\widetilde{\psi}_1$ and $\widetilde{\psi}_2$ chosen so that
\begin{equation*}
\bar\partial_{z_j}\widetilde{\psi}_j=\widetilde{\phi}_j,\hspace{0.1in}
\bar\partial_{z_k}\widetilde{\psi}_j=0\ \text{for}\ k\neq j,
\end{equation*}
we will have
$$\bar\partial\left(\widetilde{\psi}_1+\widetilde{\psi}_2\right)
=\widetilde{\phi}_1d{\bar z}_1+\widetilde{\phi}_2d{\bar z}_2,$$
and
\begin{equation*}
(\phi_*)^*=\phi.
\end{equation*}
\indent
In the proposition below we identify the spaces of residual cohomologies
$H^{(0,1)}_R({\cal C})$ of curve ${\cal C}$
and the cohomologies $H^1({\cal C},{\cal O})$ of the structural sheaf
${\cal O}$ on ${\cal C}$.

\begin{proposition}\label{Isomorphism}(compare with \cite{Gr, Ha2})
(Isomorphism of cohomologies) Let
$${\cal C}=\left\{z\in\C\P^2: P(z)=0\right\}$$
be a curve in $\C\P^2$ with nodal points as the only singularities.
Then there exists an isomorphism
\begin{equation}\label{imathIsomorphism}
\imath:H^{(0,1)}_R({\cal C})\to H^1({\cal C},{\cal O}).
\end{equation}
\end{proposition}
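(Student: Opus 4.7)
The plan is to prove Proposition~\ref{Isomorphism} by constructing an explicit fine $\bar\partial$-resolution of ${\cal O}_{\cal C}$ by sheaves of residual currents, and then invoking the abstract de~Rham theorem. Let ${\cal R}^{(0,q)}$ denote the sheaf on ${\cal C}$ whose sections over an open set $W\subset{\cal C}$ are residual $(0,q)$-currents represented by smooth differential forms $\Phi$ in some $\C\P^2$-neighborhood of $W$, modulo the equivalence $\Phi\sim\Phi+P\cdot\Omega$ coming from \eqref{ResidualCurrent}. Because these sheaves are modules over the sheaf of smooth functions (pulled back from a neighborhood in $\C\P^2$), they admit partitions of unity and are therefore fine.

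The first step is to identify the kernel sheaf ${\cal K}=\ker\bigl(\bar\partial:{\cal R}^{(0,0)}\to{\cal R}^{(0,1)}\bigr)$ with ${\cal O}_{\cal C}$. Away from the nodes this is immediate from the classical $\bar\partial$-lemma restricted to the curve. At a node $p$, pick local coordinates in which the curve has equation $P(z)=z_1z_2$; a section of ${\cal R}^{(0,0)}$ is represented by a smooth function $\Phi$ modulo $(z_1z_2)$, and the condition $\bar\partial\Phi\in(z_1z_2)$ forces $\partial_{\bar z_k}\Phi=0$ on each branch $\{z_j=0\}$ for $k\neq j$. Thus $\Phi|_{{\cal C}_1}$ is holomorphic in $z_2$ and $\Phi|_{{\cal C}_2}$ is holomorphic in $z_1$, and the two restrictions are forced to coincide at $p$; this is precisely the stalk ${\cal O}_{{\cal C},p}$.

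The second step is the Poincar\'e lemma for residual $(0,1)$-currents. Away from the nodes this is the standard one-variable $\bar\partial$-lemma. At a node, given a $\bar\partial$-closed $\Phi\in{\cal R}^{(0,1)}$, first use the direct-image construction of \eqref{DirectImage} to modify $\Phi$ within its equivalence class to the form $\tilde\phi_1 d\bar z_1+\tilde\phi_2 d\bar z_2$ with $\tilde\phi_j$ holomorphic in $z_k$ for $k\neq j$; then solve $\partial_{\bar z_j}\tilde\psi_j=\tilde\phi_j$ on each branch, with $\tilde\psi_j$ again chosen holomorphic in the transverse variable, and set $\psi=\tilde\psi_1+\tilde\psi_2$. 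A direct computation gives $\bar\partial\psi=\tilde\phi_1 d\bar z_1+\tilde\phi_2 d\bar z_2$, which represents the original class in ${\cal R}^{(0,1)}$. This furnishes the fine resolution $0\to{\cal O}_{\cal C}\to{\cal R}^{(0,0)}\stackrel{\bar\partial}{\to}{\cal R}^{(0,1)}\to 0$, whence abstract de~Rham yields $H^1({\cal C},{\cal O}_{\cal C})\cong H^{(0,1)}_R({\cal C})$, with the isomorphism $\imath$ induced by the identity on global currents.

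The main obstacle I expect is the second step near the nodes: although the splitting $\Phi\equiv\tilde\phi_1 d\bar z_1+\tilde\phi_2 d\bar z_2$ is suggested by \eqref{DirectImage}, one must verify that such a representative exists within every equivalence class modulo $(z_1z_2)$ on a sufficiently small polydisc, and that the one-variable primitives $\tilde\psi_j$ can simultaneously be chosen holomorphic in the transverse variable (a $\bar\partial$-equation with a holomorphic parameter, solvable by the Cauchy--Green integral applied fiberwise). One must also verify that the resulting $\psi$ defines a genuine residual current in the Coleff--Herrera sense of \eqref{GlobalCurrent}, in that its pairing with test forms in ${\cal E}^{(n,n-m-q)}({\cal C},\omega^{\circ}_{\cal C})$ is finite and compatible with the transition rule \eqref{Transition}. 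Once these local statements are in hand, the fineness of ${\cal R}^{(0,q)}$ and the sheaf-theoretic wrap-up via abstract de~Rham are routine.
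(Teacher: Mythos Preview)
Your overall strategy---build a fine resolution $0\to{\cal O}_{\cal C}\to{\cal R}^{(0,0)}\xrightarrow{\bar\partial}{\cal R}^{(0,1)}\to 0$ and invoke abstract de~Rham---is valid and is the sheaf-theoretic repackaging of what the paper does. The paper, however, proceeds more concretely: it writes down the \v Cech--Dolbeault map $\imath$ and its inverse by hand. Given a $\bar\partial$-closed residual current $\{\Phi_i\}$ with $\bar\partial\Phi_i=F^{(i)}\Omega_i$, it solves $\bar\partial\Psi_i=\Omega_i$ in the ambient polydisc of $\C^2$; then $\Phi_i-F^{(i)}\Psi_i$ is $\bar\partial$-closed in $\C^2$, hence $\bar\partial\Theta_i$, and one sets $\imath\{\Phi_i\}=\{\Theta_i-\Theta_j\}$. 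The inverse is built from a partition of unity in the usual way. This buys two things over your approach: an explicit formula for $\imath$ (in keeping with the paper's aims), and a local Poincar\'e lemma that works uniformly at smooth and nodal points alike, with no branch-by-branch analysis.

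The obstacle you anticipate is genuine, and in fact your proposed splitting fails. You ask to modify $\Phi=a\,d\bar z_1+b\,d\bar z_2$ within its class modulo $(z_1z_2)$ so that the $d\bar z_1$-coefficient becomes holomorphic in $z_2$ and the $d\bar z_2$-coefficient holomorphic in $z_1$; but adding $z_1z_2\cdot\Omega$ can only change $\partial_{\bar z_2}a$ by an element of $(z_1z_2)$, so you would need $\partial_{\bar z_2}a\in(z_1z_2)$ to begin with, which is not implied by $\bar\partial\Phi\in(z_1z_2)$. For instance $\Phi=\bar z_2\,d\bar z_1+\bar z_1\,d\bar z_2=\bar\partial(\bar z_1\bar z_2)$ is residually exact, yet $\partial_{\bar z_2}a=1\notin(z_1z_2)$, so no representative of the required shape exists. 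The fix is exactly the paper's device: solve $\bar\partial$ in the ambient space rather than branch by branch. The same ambient trick also cleanly gives your Step~1, since $\bar\partial\Phi=F\Omega$ with $\bar\partial\Psi=\Omega$ yields $\Phi=h+F\Psi$ with $h$ holomorphic, whence $\ker\bar\partial\big|_{{\cal R}^{(0,0)}}={\cal O}(U)/(F)={\cal O}_{\cal C}$.
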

\begin{proof}
To define $\imath$ we consider an
arbitrary $\phi\in Z^1_R({\cal C},{\cal O})$ represented locally on a cover
$\left\{U_i\right\}_{i=1}^N$ by residual currents
$$\left\{\frac{\Phi_i^{(0,1)}}{F^{(i)}}\right\}_{i=1}^N$$
satisfying $\bar\partial\Phi_i=F^{(i)}\Omega_i^{(0,2)}$. Then, using existence
of $\Psi^{(0,1)}_i\in {\cal E}^{(0,1)}(U_i)$ such that
$$\bar\partial\Psi^{(0,1)}_i=\Omega_i^{(0,2)}$$
we obtain
$$\bar\partial\left(\Phi_i-F^{(i)}\Psi_i\right)=0\hspace{0.1in}\text{in}\ U_i,$$
and therefore
$$\Phi_i=\bar\partial\Theta_i+F^{(i)}\Psi_i$$
for some $\Theta_i\in {\cal E}^{(0,0)}(U_i)$.\\
\indent
From the last equality we obtain that
$$\bar\partial\left(\Theta_i-\Theta_j\right)=F_i\Omega_{ij}\ \mbox{on}\ U_{ij}=U_i\cap U_j,$$
and therefore, by defining 
\begin{equation}\label{Mapimath}
\imath\left\{\Phi_i\right\}=\left\{\Theta_i-\Theta_j\right\}
\end{equation}
we obtain a cocycle in $Z^1({\cal C},{\cal O})$.

\indent
To construct the inverse to $\imath$ we take $\left\{\Theta_{ij}\right\}$ - a cocycle in
$Z^1({\cal C},{\cal O})$. We consider a partition of
unity $\left\{\vartheta_i\right\}_{i=1}^N$ subordinate to some open cover
$\left\{U_i\right\}_{i=1}^N$ of an open neighborhood $U\supset V$.
Without loss of generality we may assume that $\left\{\Theta_{ij}\right\}$ are restrictions of
holomorphic functions on $U_{ij}$. We define the cochain
$$\Theta_i=\sum_{k\neq i,\ U_k\cap U_i\neq\emptyset}\vartheta_k\Theta_{ik},$$
and notice that on $U_{ij}$ we have the equality
\begin{multline*}
\Theta_i-\Theta_j=\sum_{k\neq i,\ U_k\cap U_i\neq\emptyset}\vartheta_k\Theta_{ik}
-\sum_{k\neq j,\ U_k\cap U_j\neq\emptyset}\vartheta_k\Theta_{jk}\\
=\sum_{k\neq i,j\ U_k\cap U_i\neq\emptyset}\vartheta_k\left(\Theta_{ik}-\Theta_{jk}\right)
+\vartheta_j\Theta_{ij}-\vartheta_i\Theta_{ji}\\
=\sum_{k\neq i,j\ U_k\cap U_i\neq\emptyset}\vartheta_k\Theta_{ij}+\vartheta_j\Theta_{ij}
+\vartheta_i\Theta_{ij}+F_i\Omega_{ij}=\Theta_{ij}+F_i\Omega_{ij}
\end{multline*}
with some functions $\Omega_{ij}\in {\cal E}^{(0,0)}(U_{ij})$, and its corollary
\begin{equation}\label{DifferenceIdeal}
\bar\partial\left(\Theta_i-\Theta_j\right)=F_i\bar\partial\Omega_{ij}.
\end{equation}
Therefore, the collection
\begin{equation}\label{ResidualImage}
\left\{\Phi_i=\bar\partial\Theta_i
=\sum_{k\neq i,\ U_k\cap U_i\neq\emptyset}
\Theta_{ik}\bar\partial \vartheta_k\right\}_{i=1}^N\in Z^{(0,1)}_R\left({\cal C}\right)
\end{equation}
defines a $\bar\partial$-closed residual current in a neighborhood of $V$.\\
\indent
If we apply the map $\imath$ from \eqref{Mapimath} to the current
in \eqref{ResidualImage}, then using equality \eqref{DifferenceIdeal} we obtain
$$\imath\left\{\Phi_i\right\}=\left\{\Theta_i-\Theta_j\right\}
=\left\{\Theta_{ij}\right\}\in Z^1({\cal C},{\cal O}).$$
\end{proof}

\indent
Let now $\left\{p^{(i)}\right\}_{i=1}^r$ be the nodal points in ${\cal C}$, and
let points $p^{(i)}_1,p^{(i)}_2\in {\cal X}$ be such that
$\varrho(p^{(i)}_1)=\varrho(p^{(i)}_2)=p^{(i)}\in{\cal C}$.
We consider a collection of paths $\left\{\gamma_i\right\}_{i=1}^r$ in ${\cal X}$
such that $\gamma_i(0)=p^{(i)}_1$ and $\gamma_i(1)=p^{(i)}_2$
and functions $\left\{f_i\right\}_{i=1}^r\in {\cal E}\left({\cal X}\right)$ with
supports in some neighborhoods $U_i\supset \gamma_i$ such that
\begin{equation}\label{fFunctions}
\begin{cases}
\ f_i(p^{(i)}_1)=0,\ f_i(p^{(i)}_2)=1,\vspace{0.1in}\\
\ \partial f_i=0\ \mbox{in}\ V_i\Subset U_i.
\end{cases}
\end{equation}
Then for a path $\gamma_i\subset{\cal X}$ we have
$$\int_{\gamma_i}\bar\partial f_i=f_i(p^{(i)}_2)-f_i(p^{(i)}_1)=1.$$
\indent
For an arbitrary form $\phi\in {\cal E}^{(0,1)}({\cal X})$
and the corresponding residual current $\phi_*$ on ${\cal C}$ we consider
the decomposition on ${\cal C}$ that follows from Theorem~\ref{HomogeneousHodge}:
\begin{equation*}
\phi_*=\bar\partial I[\phi_*]+L[\phi_*],
\end{equation*}
and the lift of this decomposition on ${\cal X}$:
\begin{equation}\label{phiStar}
\phi=(\phi_*)^*=\bar\partial (I[\phi_*])^*+(L[\phi_*])^*.
\end{equation}
From Proposition 4.4 in \cite{HP4} it follows that if $\phi\in {\cal E}^{(0,1)}({\cal X})$, then
$I[\phi_*]\in {\cal E}({\cal C})$, and therefore
\begin{equation*}
(I[\phi_*])^*\in {\cal E}({\cal X}).
\end{equation*}
\indent
We consider the scalar product on ${\cal E}^{(0,1)}\left({\cal X}\right)$
\begin{equation}\label{ScalarProduct}
\left\langle\phi,\psi\right\rangle=\int_{\cal X}\phi\wedge \overline{\psi}
\end{equation}
and assume without loss of generality that the collection of forms
$\left\{(L[(\bar\partial f_i)_*])^*\right\}_{i=1}^r$
is orthonormalized with respect to scalar product in \eqref{ScalarProduct}.
Then for $\phi\in {\cal E}^{(0,1)}\left({\cal X}\right)$
we define for $i=1,\dots,r$,
\begin{equation}\label{aCoefficients}
a_i[\phi]=\int_{\cal X}\phi\wedge \overline{(L[(\bar\partial f_i)_*])^*},
\end{equation}
and consider operators ${\cal L}:{\cal E}^{(0,1)}({\cal X})\to {\cal E}^{(0,1)}({\cal X})$
and ${\cal I}:{\cal E}^{(0,1)}({\cal X})\to {\cal E}({\cal X})$, defined as
\begin{equation}\label{CalLOperator}
{\cal L}[\phi]=(L[\phi_*])^*-\sum_{i=1}^ra_i[\phi](L[(\bar\partial f_i)_*])^*,
\end{equation}
and
\begin{equation}\label{CalIOperator}
{\cal I}[\phi]=\left(I\left[\phi_*-\sum_{i=1}^ra_i[\phi](\bar\partial f_i)_*\right]\right)^*
+\sum_{i=1}^ra_i[\phi]f_i.
\end{equation}

\begin{proposition}\label{CurveHodge}
(Hodge-type decomposition on a nonsingular curve)
Let ${\cal X}$ be a nonsingular curve, and
let ${\cal X}\stackrel{\varrho}{\rightarrow}\C\P^2$ be an immersion of ${\cal X}$ into
$\C\P^2$ with $r$ nodal points. Let $\left\{f_i\right\}_{i=1}^r$ satisfy \eqref{fFunctions}
and let operators ${\cal L}$ and ${\cal I}$ be defined respectively in \eqref{CalLOperator}
and \eqref{CalIOperator}.\\
\indent
Then for the space of $(0,1)$-forms ${\cal E}^{(0,1)}({\cal X})$
\begin{itemize}
\item[(i)]
the following decomposition holds
\begin{equation}\label{HodgeonCurve}
\phi=\bar\partial{\cal I}[\phi]+{\cal L}[\phi],
\end{equation}
\item[(ii)]
a $(0,1)$-form $\phi\in {\cal E}^{(0,1)}({\cal X})$
is $\bar\partial$-exact, iff ${\cal L}[\phi]=0$.
\end{itemize}
\end{proposition}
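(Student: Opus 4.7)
For (i), I would obtain the decomposition by lifting Theorem~\ref{HomogeneousHodge} from ${\cal C}$ to ${\cal X}$ and making one bookkeeping substitution. Applied with $n=2$, $m=1$, $q=1$ on the nodal plane curve ${\cal C}=\varrho({\cal X})$, the theorem gives $\phi_* = \bar\partial I[\phi_*] + L[\phi_*]$. Pulling back by the lift $\varrho^*$ (the $*$ operation in the excerpt) and using the two facts already established---namely $(\phi_*)^* = \phi$ and that $\varrho^*$ commutes with $\bar\partial$ because $\varrho$ is holomorphic---yields $\phi = \bar\partial(I[\phi_*])^* + (L[\phi_*])^*$. Applying the same identity to each $\bar\partial f_i$ in place of $\phi$ produces $\bar\partial f_i = \bar\partial(I[(\bar\partial f_i)_*])^* + (L[(\bar\partial f_i)_*])^*$. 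Multiplying by $a_i[\phi]$, subtracting, regrouping the resulting $\bar\partial$-exact pieces with the $\sum_i a_i[\phi] f_i$ primitive, and matching with the definitions in \eqref{CalIOperator} and \eqref{CalLOperator} gives $\phi = \bar\partial {\cal I}[\phi] + {\cal L}[\phi]$ as claimed.

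For (ii), the ``if'' direction is immediate from (i). For the ``only if'' direction, suppose $\phi = \bar\partial g$ with $g \in {\cal E}({\cal X})$, and set $d_i = g(p^{(i)}_2) - g(p^{(i)}_1)$. Then $g_0 := g - \sum_i d_i f_i$ takes equal values at the two preimages of every node (the $f_j$ are supported in disjoint neighborhoods), so $g_0$ descends via $\varrho$ to a smooth function $(g_0)_*$ on ${\cal C}$. Consequently, the residual current $\phi_* - \sum_i d_i (\bar\partial f_i)_* = (\bar\partial g_0)_* = \bar\partial (g_0)_*$ is $\bar\partial$-exact on ${\cal C}$, so Theorem~\ref{HomogeneousHodge}(ii) yields $L[\phi_*] = \sum_i d_i L[(\bar\partial f_i)_*]$ and, after lifting, $(L[\phi_*])^* = \sum_i d_i (L[(\bar\partial f_i)_*])^*$. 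Substituting into the definition of ${\cal L}[\phi]$ gives
\begin{equation*}
{\cal L}[\phi] = \sum_{i=1}^r \bigl(d_i - a_i[\phi]\bigr)\,(L[(\bar\partial f_i)_*])^{*},
\end{equation*}
so by the orthonormality of the collection $\{(L[(\bar\partial f_i)_*])^*\}_{i=1}^r$, proving ${\cal L}[\phi] = 0$ reduces to the scalar identity $a_i[\phi] = d_i$ for each $i$.

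The main obstacle is precisely this last identity: one must verify that
\begin{equation*}
\int_{\cal X} \bar\partial g \wedge \overline{(L[(\bar\partial f_i)_*])^{*}} = g(p^{(i)}_2) - g(p^{(i)}_1).
\end{equation*}
By Stokes' theorem on the compact Riemann surface ${\cal X}$ and the vanishing of $(2,0)$-forms in complex dimension one, this is equivalent to the distributional identity $\bar\partial\,\overline{(L[(\bar\partial f_i)_*])^{*}} = \delta_{p^{(i)}_1} - \delta_{p^{(i)}_2}$ (with appropriate sign); equivalently, the form $(L[(\bar\partial f_i)_*])^{*}$ should be a holomorphic $(0,1)$-fundamental solution on ${\cal X}$ whose only singular support is at the nodal preimage pair $(p^{(i)}_1, p^{(i)}_2)$. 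I expect that this will follow from a direct analysis of the Coleff--Herrera--Cauchy--Weil--Leray integral kernel \eqref{HomogeneousL} defining $L_1$, evaluated on the specific test current $(\bar\partial f_i)_*$ which is smooth and $\bar\partial$-exact away from the node $p^{(i)}$ and concentrates its cohomological content precisely at that node; the resulting lifted form should carry the cohomology class dual to the path $\gamma_i$ running from $p^{(i)}_1$ to $p^{(i)}_2$ chosen earlier.
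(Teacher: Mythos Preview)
Your argument for part~(i) is correct and follows the same route as the paper: lift the decomposition $\phi_*=\bar\partial I[\phi_*]+L[\phi_*]$ from ${\cal C}$ to ${\cal X}$, subtract the linear combination $\sum_i a_i[\phi]\,\bar\partial f_i$, and regroup.

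For part~(ii), your reduction to
\[
{\cal L}[\bar\partial g]=\sum_{i=1}^r\bigl(d_i-a_i[\bar\partial g]\bigr)\,(L[(\bar\partial f_i)_*])^{*}
\]
is valid, but the last step---proving $a_i[\bar\partial g]=d_i$ via a distributional identity---cannot work. The form $\omega_i:=(L[(\bar\partial f_i)_*])^{*}$ is \emph{smooth} on ${\cal X}$: formula~\eqref{HomogeneousL} shows that $L_{n-m}[\psi]$ has coefficients which are polynomials in $z,\bar z$, and pulling back by the holomorphic immersion $\varrho$ preserves smoothness. Hence $\bar\partial\,\overline{\omega_i}$ is a smooth $(1,1)$-form and can never equal $\delta_{p_1^{(i)}}-\delta_{p_2^{(i)}}$. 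Equivalently, $g\mapsto a_i[\bar\partial g]=-\int_{\cal X}g\,\bar\partial\,\overline{\omega_i}$ is integration against a smooth density, while $g\mapsto d_i=g(p_2^{(i)})-g(p_1^{(i)})$ is a point-evaluation functional; these are distinct functionals on ${\cal E}({\cal X})$. Your intuition that $\omega_i$ should behave like a differential of the third kind with poles at $p_1^{(i)},p_2^{(i)}$ conflates the cohomological content of $(\bar\partial f_i)_*$ (which is indeed concentrated at the node) with the analytic regularity of $L[(\bar\partial f_i)_*]$ (which is global and polynomial regardless of the input).

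The paper proves~(ii) by an entirely different, global dimension count. From~(i), every class in $H^{(0,1)}({\cal X})$ is represented in ${\rm Im}\,{\cal L}\subset{\rm Im}\,L^{*}$; by Theorem~\ref{HomogeneousHodge} and Proposition~\ref{Isomorphism} one has $\dim{\rm Im}\,L^{*}=\dim H^1_R({\cal C},{\cal O})=p_a({\cal C})$, and after quotienting by the $r$-dimensional span of the $(L[(\bar\partial f_i)_*])^{*}$ one is left with $\dim{\rm Im}\,{\cal L}=p_a({\cal C})-r$. The classical genus formula for a nodal plane model (Hartshorne~IV.1) gives $\dim H^{(0,1)}({\cal X})=p_a({\cal C})-r$ as well; comparing the two forces ${\cal L}$ to annihilate exact forms. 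This is the argument you should use in place of the kernel analysis.
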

\begin{proof}
To prove equality \eqref{HodgeonCurve} we use equality \eqref{phiStar} to
obtain for an arbitrary $\bar\partial$-closed form $\phi\in {\cal E}^{(0,1)}({\cal X})$
the equality
\begin{equation*}
\phi-\sum_{i=1}^ra_i[\phi]\bar\partial f_i
=\bar\partial\left(I\left[\phi_*-\sum_{i=1}^ra_i[\phi](\bar\partial f_i)_*\right]\right)^*
+\left(L\left[\phi_*-\sum_{i=1}^ra_i[\phi](\bar\partial f_i)_*\right]\right)^*,
\end{equation*}
which we can rewrite as
\begin{equation*}
\phi=\bar\partial\left[\left(I\left[\phi_*-\sum_{i=1}^ra_i[\phi](\bar\partial f_i)_*\right]\right)^*
+\sum_{i=1}^ra_i[\phi]f_i\right]+{\cal L}[\phi]
=\bar\partial{\cal I}[\phi]+{\cal L}[\phi].
\end{equation*}
\indent
According to Theorem~\ref{HomogeneousHodge} and Proposition~\ref{Isomorphism}
the image of $L^*$ is a subspace of
the space of $\bar\partial$-closed forms - ${\cal E}^{(0,1)}({\cal X})$ of dimension
$\dim H_R^1({\cal C},{\cal O})=p_a({\cal C})$ - the arithmetic genus of ${\cal C}$.
We notice that for any collection $\{c_i\}_{i=1}^r$ there is no $g\in {\cal E}({\cal C})$
such that
\begin{equation*}
\bar\partial g^*=\sum_{i=1}^rc_i\bar\partial f_i,
\end{equation*}
because otherwise we would have
\begin{equation*}
g^*-\sum_{i=1}^rc_if_i=\mbox{const},
\end{equation*}
which contradicts $g^*$ taking the same values at $p^{(i)}_1$ and
$p^{(i)}_2$ for all $i=1,\dots,r$. Therefore,
\begin{equation*}
\dim\mbox{Ker}{\cal L}=\dim\mbox{Span}\left\{(L[(\bar\partial f_i)_*])^*\right\}=r,
\end{equation*}
and
\begin{equation*}
\dim\mbox{Im}\left\{{\cal L}\right\}=\dim\mbox{Im}\left\{L^*\right\}-r=p_a({\cal C})-r.
\end{equation*}
\indent
To prove item (ii) of Proposition~\ref{CurveHodge} we consider a $\bar\partial$-exact form
$\phi=\bar\partial g$ for $g\in {\cal E}({\cal X})$ and assume that
\begin{equation*}
\left(L\left[(\bar\partial g)_*-\sum_{i=1}^ra_i[\bar\partial g](\bar\partial f_i)_*\right]\right)^*
\neq 0.
\end{equation*}
Then from the inequality above it follows that
\begin{equation*}
\dim\left\{{\cal L}\left\{\bar\partial g, g\in{\cal E}({\cal X})\right\}\right\}>r,
\end{equation*}
which, according to Proposition~\ref{Isomorphism}, contradicts the statement
from IV.1 in \cite{Ha2}, that
\begin{equation*}
\dim H^{(0,1)}\left({\cal X}\right)=p_a({\cal C})-r.
\end{equation*}
\end{proof}

\indent
Using Theorem~\ref{HomogeneousHodge} and Proposition~\ref{CurveHodge} we prove
the following version of the Hodge Theorem for smooth algebraic curves,
which gives an explicit formula for solution $R_1[\phi]$ of equation
\begin{equation*}
\bar\partial R_1[\phi]=\phi-H_1[\phi].
\end{equation*}

\begin{theorem}\label{ExplicitHodge}
(Explicit formulas in the Hodge decomposition)
Let ${\cal X}$ be a smooth algebraic curve, let ${\cal L}$ and ${\cal I}$
be operators from \eqref{CalLOperator} and \eqref{CalIOperator} respectively, and
let $\left\{\omega_j\right\}_{j=1}^g$ be an orthonormal basis of
holomorphic $(1,0)$-forms on ${\cal X}$, i.e.
$$\int_{\cal X}\omega_j\wedge\bar\omega_k=\delta_{jk},\ j,k=1,\dots,g.$$
\indent
Then Hodge operators $H_1$ and $R_1$ in decomposition \eqref{ClassicalDecomposition}
admit the following representations
\begin{equation}\label{ExplicitHodgeRepresentation}
\begin{aligned}
&H_1[\phi]=\sum_{j=1}^g\left(\int_{\cal C}\phi\wedge\omega_j\right)\bar\omega_j,\\
&R_1[\phi]={\cal I}\big[\phi+\left({\cal L}-H_1\right)[\phi]\big]+\text{const}.
\end{aligned}
\end{equation}
\end{theorem}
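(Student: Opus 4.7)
The plan is to treat the two formulas in \eqref{ExplicitHodgeRepresentation} separately. The identity for $H_1$ is essentially the expansion of an orthogonal projection in a given orthonormal basis: since $\{\omega_j\}_{j=1}^g$ is an orthonormal basis of holomorphic $(1,0)$-forms, the conjugate collection $\{\bar\omega_j\}_{j=1}^g$ is an orthonormal basis of ${\cal H}^{(0,1)}({\cal X})$ for the Hermitian inner product \eqref{ScalarProduct}. I would observe that $\langle\phi,\bar\omega_j\rangle = \int_{\cal X}\phi\wedge\omega_j$, and then the classical formula for the orthogonal projection onto a subspace spanned by an orthonormal basis gives $H_1[\phi]=\sum_j\langle\phi,\bar\omega_j\rangle\bar\omega_j$, as claimed.

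For the formula for $R_1$, the key observation is that the two $(0,1)$-forms ${\cal L}[\phi]$ and $H_1[\phi]$ are both representatives of the same Dolbeault class $[\phi]\in H^{(0,1)}({\cal X})$. Indeed, by part (i) of Proposition~\ref{CurveHodge} one has $\phi-{\cal L}[\phi]=\bar\partial{\cal I}[\phi]$, while the classical Hodge Theorem gives $\phi-H_1[\phi]=\bar\partial R_1[\phi]$ for some $R_1[\phi]\in{\cal E}({\cal X})$. Subtracting, there exists $h\in{\cal E}({\cal X})$ with
\begin{equation*}
\bar\partial h={\cal L}[\phi]-H_1[\phi].
\end{equation*}
Consequently $\phi+({\cal L}-H_1)[\phi]=\phi+\bar\partial h$, and by the linearity of ${\cal I}$ one has ${\cal I}[\phi+\bar\partial h]={\cal I}[\phi]+{\cal I}[\bar\partial h]$.

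The next step is to show that ${\cal I}$ inverts $\bar\partial$ on $\bar\partial$-exact forms up to a constant. Since $\bar\partial h$ is exact, part (ii) of Proposition~\ref{CurveHodge} gives ${\cal L}[\bar\partial h]=0$, so \eqref{HodgeonCurve} reduces to $\bar\partial h=\bar\partial{\cal I}[\bar\partial h]$, and connectedness of ${\cal X}$ yields ${\cal I}[\bar\partial h]=h+\text{const}$. Combining these ingredients and using again $\phi-{\cal L}[\phi]=\bar\partial{\cal I}[\phi]$, I would compute
\begin{equation*}
\bar\partial R_1[\phi]=\bar\partial{\cal I}[\phi]+\bar\partial h=(\phi-{\cal L}[\phi])+({\cal L}[\phi]-H_1[\phi])=\phi-H_1[\phi],
\end{equation*}
which is the required Hodge equation; the ambiguity of $h$ is absorbed in the trailing \emph{const}.

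The main obstacle in this plan is the cohomological step asserting that ${\cal L}[\phi]-H_1[\phi]$ is $\bar\partial$-exact on the smooth curve ${\cal X}$, rather than on the singular image ${\cal C}\subset\C\P^2$. This is precisely what Proposition~\ref{CurveHodge} together with Proposition~\ref{Isomorphism} are designed to deliver: they identify the image of ${\cal L}$ with a space of dimension $p_a({\cal C})-r=g$, matching $\dim{\cal H}^{(0,1)}({\cal X})$, so that ${\cal L}$ is a (non-orthogonal) projector onto a cohomological complement of the exact forms on ${\cal X}$, and therefore differs from the orthogonal projector $H_1$ only by a $\bar\partial$-exact term. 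Once this identification is in place the remainder of the argument is a short linear chase.
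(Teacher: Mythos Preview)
Your proposal is correct and follows essentially the same route as the paper: subtract the classical Hodge decomposition from the decomposition \eqref{HodgeonCurve} to see that $({\cal L}-H_1)[\phi]$ is $\bar\partial$-exact, then use part (ii) of Proposition~\ref{CurveHodge} to conclude that ${\cal I}$ solves $\bar\partial$ on this exact form, and finish by linearity of ${\cal I}$ together with the fact that holomorphic functions on ${\cal X}$ are constant. Your write-up is in fact slightly more explicit than the paper's, since you spell out the orthogonal-projection formula for $H_1$ and reference Proposition~\ref{CurveHodge} rather than Theorem~\ref{HomogeneousHodge} (the paper's citation of \eqref{HomogeneousHomotopy} at this point should really be \eqref{HodgeonCurve}).
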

\begin{proof}
Combining decompositions \eqref{ClassicalDecomposition} and \eqref{HomogeneousHomotopy} we
obtain equality
\begin{equation}\label{TwoSides}
\bar\partial\big(R_1[\phi]-{\cal I}[\phi]\big)=\left({\cal L}-H_1\right)[\phi].
\end{equation}
Then, applying Theorem\ref{HomogeneousHodge} to the $\bar\partial$-exact form
$\psi=\left({\cal L}-H_1\right)[\phi]$ we obtain equalities
\begin{equation*}
\psi=\bar\partial {\cal I}[\psi]
\end{equation*}
and
\begin{equation}\label{psiExact}
\left({\cal L}-H_1\right)[\phi]=\bar\partial\big({\cal I}\left({\cal L}-H_1\right)[\phi]\big).
\end{equation}
Finally, from equalities \eqref{TwoSides} and \eqref{psiExact} we obtain
equality
\begin{equation*}
\bar\partial\big(R_1[\phi]-{\cal I}[\phi]-{\cal I}\left({\cal L}-H_1\right)[\phi]\big)=0,
\end{equation*}
which implies the second equality in \eqref{ExplicitHodgeRepresentation}.
\end{proof}

\section{Explicit solution of $\bar\partial$-equation on affine curves.}\label{Vanishing}

\indent
In this section we prove solvability of the $\bar\partial$-equation on affine smooth
algebraic curves. Let ${\cal X}$ be a nonsingular algebraic curve,
and let $\varrho:{\cal X}\to\C\P^2$
be an immersion of ${\cal X}$ as in \eqref{varrhoMap} such that
\begin{equation}\label{CurveC}
{\cal C}=\varrho\left({\cal X}\right)=\left\{z\in\C\P^2: P(z)=0\right\}
\end{equation}
is a plane curve of $\deg P=d$ with at most nodes as singularities.
Without loss of generality we may assume that the intersection of ${\cal C}$ with the line at infinity
\begin{equation}\label{InfiniteLine}
{\cal C}\cap\left\{(z_0,z_1,z_2)\in \C\P^2: z_0=0\right\}=\left\{z^{(1)},\dots,z^{(r)}\right\}
\end{equation}
consists of $r$ points with multiplicities $m_1,\dots,m_r$ such that $\sum_{i=1}^r m_i=d$.
We denote
\begin{equation*}
\mathring{\cal C}={\cal C}\setminus\left\{z^{(1)},\dots,z^{(r)}\right\},
\end{equation*}
and
\begin{equation*}
\mathring{\cal X}={\cal X}
\setminus\left\{\varrho^{-1}(z^{(1)}),\dots,\varrho^{-1}(z^{(r)})\right\}.
\end{equation*}

\begin{proposition}\label{Solvability} (Hodge-Kodaira Vanishing Theorem for affine curves)
Let ${\cal X}$ and ${\cal C}\subset \C\P^2$
be curves as in \eqref{CurveC}, with ${\cal C}$ of degree $d$ satisfying
condition \eqref{InfiniteLine}. Let $\phi^{(0,1)}\in{\cal E}(\mathring{\cal X})$
be a form on $\mathring{\cal X}$, and $\phi_*$ be its direct image on $\mathring{\cal C}$.
If for some $\ell$ satisfying $\ell>d-3$ the form $\zeta_0^{\ell}\phi_*(\zeta)$
admits an extension $\psi(\zeta)\in C^{(0,1)}({\cal C})$, then there exists a function
$g\in{\cal E}(\mathring{\cal X})$ such that
\begin{equation}\label{AffineSolution}
\left\{\ \begin{aligned}
&\bar\partial g=\phi,\vspace{0.1in}\\
&|g(\zeta)|\leq C|\zeta_0|^{-\ell}.
\end{aligned}\right.
\end{equation}
\end{proposition}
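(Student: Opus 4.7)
The plan is to transport the problem from $\mathring{\cal X}$ to the global projective nodal curve ${\cal C}$ via the direct image $\varrho_*$, solve $\bar\partial$ there using Corollary~\ref{PositiveHomogeneity}, and then pull back to $\mathring{\cal X}$, using the weight $\zeta_0^{\ell}$ to absorb the growth at the $r$ points at infinity.

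Concretely, the form $\phi$ is automatically $\bar\partial$-closed on the one-dimensional complex manifold $\mathring{\cal X}$, so its direct image $\phi_*$ constructed as in \eqref{DirectImage} defines a $\bar\partial$-closed residual current on $\mathring{\cal C}$. By hypothesis, $\psi := \zeta_0^{\ell}\phi_*$ extends continuously to all of ${\cal C}$ and so belongs to $Z_R^{(0,1)}\left({\cal C},{\cal O}(\ell)\right)$. With $n=2$ and $m=1$ the inequality $\ell > d - 3$ reads $\ell > d - n - 1$, so Corollary~\ref{PositiveHomogeneity} applies and yields $L_1[\psi] = 0$ together with
\[
\psi = \bar\partial\, I_1[\psi] \quad \text{on } {\cal C},
\]
where the explicit integral operator $I_1$ from \eqref{HomogeneousI} produces a continuous, bounded function on the compact variety ${\cal C}$. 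Dividing by the holomorphic factor $\zeta_0^{\ell}$ on $\mathring{\cal C}$, the function $g_* := I_1[\psi]/\zeta_0^{\ell}$ satisfies $\bar\partial g_* = \phi_*$, and setting $g := (g_*)^{*} = g_* \circ \varrho$ on $\mathring{\cal X}$ produces the candidate solution. The pointwise bound $|I_1[\psi]| \le C$ on ${\cal C}$ then yields the required estimate $|g(\zeta)| \le C |\zeta_0|^{-\ell}$.

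The point that requires care, and where I expect most of the technical work, is the verification that $g = g_*\circ\varrho$ truly belongs to ${\cal E}(\mathring{\cal X})$ and satisfies $\bar\partial g = \phi$ despite the nodes of ${\cal C}$, whose preimages are ordinary points of $\mathring{\cal X}$. Near such a node $p \in {\cal C}$ with preimages $p_1, p_2 \in \mathring{\cal X}$, the integral defining $I_1[\psi]$ yields a smooth ambient representative in a neighborhood $U \subset \C\P^2$ of $p$; its restrictions to the two smooth local branches ${\cal C}_1$ and ${\cal C}_2$ pull back via the local inverses of $\varrho$ to smooth functions at $p_1$ and $p_2$, and the relation $(\phi_*)^{*} = \phi$ recalled just before Proposition~\ref{Isomorphism} guarantees that $\bar\partial g$ reproduces the two local components of $\phi$ separately. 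This is in direct analogy with the construction underlying Proposition~\ref{CurveHodge} and requires no new machinery beyond what was already used there.
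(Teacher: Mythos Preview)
Your overall strategy---push $\phi$ forward to ${\cal C}$, invoke Corollary~\ref{PositiveHomogeneity}, divide by $\zeta_0^{\ell}$, and pull back---is exactly the paper's. But you omit a regularization step that the paper carries out, and without it your appeal to Corollary~\ref{PositiveHomogeneity} is not justified. The hypothesis only says that the extension $\psi$ lies in $C^{(0,1)}({\cal C})$: it is smooth on $\mathring{\cal C}$ but merely \emph{continuous} at the points $z^{(1)},\dots,z^{(r)}$ at infinity. Membership in $Z_R^{(0,1)}\!\left({\cal C},{\cal O}(\ell)\right)$, however, is defined via \emph{$C^{\infty}$} local representatives $\Phi_\alpha$ satisfying \eqref{Closed}, and Theorem~\ref{HomogeneousHodge} (hence Corollary~\ref{PositiveHomogeneity} and the regularity of $I_1$) is proved under that assumption. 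So the sentence ``$\psi$ \dots\ belongs to $Z_R^{(0,1)}\!\left({\cal C},{\cal O}(\ell)\right)$'' is precisely the unproved step.

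The paper fixes this as follows. Near each point $z^{(j)}$ at infinity it chooses local coordinates on a chart $U^{(j)}$ and solves $\bar\partial g^{(j)}=\psi$ on a smaller neighborhood $V^{(j)}\Subset U^{(j)}$ by the Cauchy--Green formula, arranging $g^{(j)}\in C({\cal C})\cap C^{\infty}(\mathring{\cal C})$ with compact support in ${\cal C}\cap U^{(j)}$. The modified form $\tilde\psi=\psi-\sum_j\bar\partial g^{(j)}$ then vanishes on each $V^{(j)}$ and is smooth on all of ${\cal C}$, so it genuinely defines a $\bar\partial$-closed residual current of homogeneity $\ell>d-3$ to which Corollary~\ref{PositiveHomogeneity} applies, giving $\tilde g$ with $\bar\partial\tilde g=\tilde\psi$. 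One then sets $g_*=\tilde g+\sum_j g^{(j)}$ and $g=(\zeta_0^{-\ell}g_*)^{*}$; the growth estimate follows since both $\tilde g$ and the $g^{(j)}$ are bounded on ${\cal C}$. Your discussion of the behavior at the nodes is fine; it is the behavior at the points of ${\cal C}\cap\{z_0=0\}$ that needs the extra work.
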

\begin{proof}
Let $z^{(j)}\in {\cal C}\cap\left\{\C\P^2: z_0=0\right\}$ be one of the points of ${\cal C}$
at infinity. We consider a neighborhood $U^{(j)}$ of $z^{(j)}$ in $\C^2$ with
coordinates $\zeta_1,\zeta_2$ such that
\begin{equation*}
{\cal C}\cap U^{(j)}=\left\{\zeta\in U^{(j)}: \zeta_2=0\right\}.
\end{equation*}
Using Cauchy-Green formula we solve the $\bar\partial$-equation on ${\cal C}\cap U^{(j)}$
and obtain a function
\begin{equation*}
g^{(j)}(\zeta_1)\in C({\cal C})\cap C^{\infty}(\mathring{\cal C})
\end{equation*}
with compact support in ${\cal C}\cap U^{(j)}$ satisfying equality
\begin{equation*}
\bar\partial g^{(j)}=\psi\big|_{{\cal C}\cap V^{(j)}},
\end{equation*}
where $V^{(j)}\Subset U^{(j)}$.\\
\indent
Then the form
\begin{equation*}
\tilde{\psi}=\psi-\sum_{j=1}^d\bar\partial g^{(j)}
\end{equation*}
defines a $\bar\partial$-closed residual current on ${\cal C}$ of degree $\ell>d-3$,
and therefore, using Corollary~\ref{PositiveHomogeneity} we obtain the existence
of a function
$\tilde{g}=I_1[\zeta_0^{\ell}\cdot \tilde{\psi}(\zeta)]\in C^{(0,0)}(\mathring{\mathcal C})$
satisfying the equation
\begin{equation*}
\bar\partial\tilde{g}=\tilde{\psi}.
\end{equation*}
\indent
Therefore function $g_*=\tilde{g}+\sum_{j=1}^d g^{(j)}$ satisfies the equation
\begin{equation*}
\bar\partial g_*(\zeta)=\psi(\zeta)=\zeta_0^{\ell}\phi_*(\zeta),
\end{equation*}
and the function $g(\zeta)=(\zeta_0^{-\ell}g_*)^*$ satisfies conditions \eqref{AffineSolution}.
\end{proof}
\indent
Combining results of Theorem~\ref{ExplicitHodge} and Proposition~\ref{Solvability} we obtain
the following proposition.

\begin{proposition}\label{CompactSolvability}
Let ${\cal X}$ and ${\cal C}\subset \C\P^2$
be curves as in \eqref{CurveC}, with ${\cal C}$ satisfying condition \eqref{InfiniteLine}.
Let $\phi^{(0,1)}\in{\cal E}(\mathring{\cal X})$
be a form on $\mathring{\cal X}$, such that its direct image
$\phi_*$ has a compact support in $\mathring{\cal C}$. Then for arbitrary $\ell>d-3$
the function
\begin{equation*}
g(\zeta)=\left(\zeta_0^{-\ell}\cdot I_1[\zeta_0^{\ell}\phi_*
-H_1[\zeta_0^{\ell}\phi_*]]\right)^*
\end{equation*}
satisfies equation
\begin{equation*}
\bar\partial g=\phi.
\end{equation*}
\end{proposition}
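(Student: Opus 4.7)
The plan is to verify $\bar\partial g = \phi$ on $\mathring{\mathcal{X}}$ by differentiating the stated formula directly, using the homotopy formula (i) of Theorem~\ref{HomogeneousHodge} together with the vanishing provided by Corollary~\ref{PositiveHomogeneity}.

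First I would observe that because $\phi_*$ has compact support in $\mathring{\mathcal{C}}$, the twisted direct image $\psi := \zeta_0^{\ell} \phi_*$ extends by zero through the points at infinity $\{z^{(1)},\dots,z^{(r)}\}$ to yield a globally defined $\bar\partial$-closed residual current on $\mathcal{C}$ of homogeneity $\ell$. This compact-support hypothesis is precisely what removes the need for the local patches $g^{(j)}$ near infinity that were used in the proof of Proposition~\ref{Solvability}, and it reduces the task to a direct application of Corollary~\ref{PositiveHomogeneity} on all of $\mathcal{C}$.

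Next, since $n=2$ and $m=1$ the hypothesis $\ell>d-3$ is exactly the inequality $\ell>d-n-1$ of Corollary~\ref{PositiveHomogeneity}. That corollary gives $L_1\equiv 0$ on $Z_R^{(0,1)}(\mathcal{C},\mathcal{O}(\ell))$ and, under the identification of Theorem~\ref{ExplicitHodge} and Proposition~\ref{Isomorphism}, forces the Hodge projection $H_1$ likewise to annihilate residual currents of this homogeneity. Consequently the homotopy formula \eqref{HomogeneousHomotopy} applied to $\psi-H_1[\psi]$ specializes to
\begin{equation*}
\bar\partial I_1\bigl[\psi - H_1[\psi]\bigr] = \psi - H_1[\psi].
\end{equation*}
Since $\zeta_0$ is holomorphic and the lift $(\cdot)^*$ commutes with $\bar\partial$ on $\mathring{\mathcal{X}}$, where $\varrho$ is a biholomorphism, I would then compute
\begin{equation*}
\bar\partial g \;=\; \bigl(\zeta_0^{-\ell}\bar\partial I_1[\psi - H_1[\psi]]\bigr)^{\!*} \;=\; \bigl(\zeta_0^{-\ell}(\psi - H_1[\psi])\bigr)^{\!*} \;=\; \bigl(\zeta_0^{-\ell}\psi\bigr)^{\!*} \;=\; \phi,
\end{equation*}
the penultimate equality following from the vanishing of $H_1[\psi]$ just established, and the last one from $\zeta_0^{-\ell}\psi = \phi_*$ together with $(\phi_*)^{\!*}=\phi$.

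The delicate point, and hence the main obstacle, is to pin down the precise interpretation of $H_1$ applied to a residual current of homogeneity $\ell$ on the nodal curve $\mathcal{C}$, and to verify its vanishing in the regime $\ell > d-3$. This comes down to checking, via Theorem~\ref{ExplicitHodge} and Proposition~\ref{Isomorphism}, that the classical Hodge projection on $\mathcal{X}$ corresponds under the direct image to the residual projection $L_1$ on $\mathcal{C}$, which Corollary~\ref{PositiveHomogeneity} kills outright in the range $\ell>d-n-1$. Once this identification is in place, what remains is a routine commutation of $\bar\partial$ with multiplication by the holomorphic function $\zeta_0^{-\ell}$ and with the biholomorphic lift $(\cdot)^*$ on $\mathring{\mathcal{X}}$.
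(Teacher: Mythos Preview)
Your argument is essentially the one the paper has in mind: the paper offers no proof beyond the single sentence ``Combining results of Theorem~\ref{ExplicitHodge} and Proposition~\ref{Solvability},'' and you have unpacked this correctly. The compact-support hypothesis replaces the local Cauchy--Green patches $g^{(j)}$ from the proof of Proposition~\ref{Solvability}, so that $\psi=\zeta_0^{\ell}\phi_*$ is already a global $\bar\partial$-closed residual current of homogeneity $\ell$; Corollary~\ref{PositiveHomogeneity} then gives $\psi=\bar\partial I_1[\psi]$, and the rest is the commutation you describe.

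One point deserves tightening. Your justification that $H_1$ vanishes because ``the classical Hodge projection on $\mathcal X$ corresponds under the direct image to the residual projection $L_1$ on $\mathcal C$'' is not quite right as stated: the image of $L_1$ has dimension $p_a(\mathcal C)$ while the image of $H_1$ has dimension $g=p_a(\mathcal C)-r$, so they are not literally the same operator (it is $\mathcal L$ from Proposition~\ref{CurveHodge}, not $L_1$, that plays the role of $H_1$). The cleaner reason $H_1[\psi]=0$ is simply that, by Corollary~\ref{PositiveHomogeneity}, $H_R^{(0,1)}(\mathcal C,\mathcal O(\ell))=0$ for $\ell>d-3$, so there are no nonzero harmonic representatives in homogeneity $\ell$ onto which to project. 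You already flag this as the delicate step, and your conclusion is correct; just replace the ``$H_1\leftrightarrow L_1$'' heuristic by the direct vanishing of the harmonic space in that homogeneity.
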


\end{document}